\theoremstyle{plain} 
\newtheorem{thm}{Theorem}
\newtheorem{cor}{Corollary}
\newtheorem{lem}{Lemma}
\theoremstyle{definition}
\theoremstyle{remark} 
\newtheorem*{remark0}{Remark}
\newtheorem*{astep}{A--step}
\newtheorem*{pstep}{P--step}
\newtheorem*{cstep}{C--step}
\newcommand{\prob}{\mathsf{P}}
\newcommand{\lPi}{\underline{\Pi}}
\newcommand{\uPi}{\overline{\Pi}}
\newcommand{\ber}{{\sf Ber}}
\newcommand{\unif}{{\sf Unif}}
\newcommand{\nm}{{\sf N}}
\newcommand{\gam}{{\sf Gamma}}
\newcommand{\mult}{{\sf Mult}}
\newcommand{\RR}{\mathbb{R}}
\newcommand{\ZZ}{\mathbb{Z}}
\renewcommand{\S}{\mathcal{S}}
\newcommand{\plint}{\mathscr{P}}
\newcommand{\eps}{\varepsilon}
\newcommand{\iid}{\overset{\text{\tiny iid}}{\,\sim\,}}
 \newcommand{\indep}{\perp\!\!\!\!\perp}
\newcommand{\Pboot}{P_\text{\rm boot}}
\newcommand{\Gboot}{G_\text{\rm boot}}
\newcommand{\piboot}{\pi^\text{\rm boot}}
\newcommand{\hatpiboot}{\hat\pi^\text{\rm boot}}
\newcommand{\uPiboot}{\uPi^\text{\rm boot}}
\newcommand{\lPiboot}{\lPi^\text{\rm boot}}
\title{Direct and approximately valid probabilistic inference on a class of statistical functionals}
\author{Leonardo Cella\footnote{Department of Statistics, North Carolina State University; {\tt lolivei@ncsu.edu}, {\tt rgmarti3@ncsu.edu}} \quad and \quad Ryan Martin$^*$}
\date{\today}
\begin{document}
\maketitle

\begin{abstract}
Existing frameworks for probabilistic inference assume the quantity of interest is the parameter of a posited statistical model. In machine learning applications, however, often there is no statistical model/parameter; the quantity of interest is a statistical functional, a feature of the underlying distribution.  Model-based methods can only handle such problems indirectly, via marginalization from a model parameter to the real quantity of interest.  Here we develop a generalized inferential model (IM) framework for direct probabilistic uncertainty quantification on the quantity of interest.  In particular, we construct a data-dependent, bootstrap-based possibility measure for uncertainty quantification and inference.  We then prove that this new approach provides approximately valid inference in the sense that the plausibility values assigned to hypotheses about the unknowns are asymptotically well-calibrated in a frequentist sense.  Among other things, this implies that confidence regions for the underlying functional derived from our proposed IM are approximately valid.  The method is shown to perform well in key examples, including quantile regression, and in a personalized medicine application. 

\smallskip {\em Keywords and phrases:} bootstrap; empirical risk minimizer; estimating equation; M-estimator; nonparametric; plausibility function; Z-estimator. 
\end{abstract}

\section{Introduction}
\label{S:intro}

In applications, the quantities of interest---or {\em inferential targets}---are often ``real'' in the sense that they are features of the population under investigation, known to exist and have meaning.  For example, moments and quantiles are real in the sense that all distributions have, say, a 0.85-quantile.  On the other hand, shape, concentration, tail-index, etc.~are parameters whose meaning relies on the context provided by a suitable statistical model.  Consequently, any inferences drawn about, say, a shape parameter, would be meaningless if there is no ``true'' shape parameter associated with the population in question.  It is important to realize that these issues cannot be remedied simply by ``picking a better model.''  Indeed, modern machine learning applications often require inference on unknowns that are defined as, say, minimizers of expected loss functions.  These are real in the sense above and the problems are often too complex to expect that they could be treated as parameters of an interpretable statistical model.  So taking a traditional statistical approach to this machine learning problem amounts to introducing a statistical model and treating the real quantity of interest indirectly through the parameters of the posited statistical model.  However, as the mantra goes, {\em All models are wrong}, so any inference about the real quantity of interest is immediately at risk of being rendered useless by model misspecification bias.  This is precisely the reason why machine learners often prefer to attack the real problem directly without considering a statistical model or using the associated statistical tools developed for model-based inference.  To bridge this gap, it is important that the statistical community address the problem of direct and reliable (probabilistic) uncertainty quantification about inferential targets that are not parameters of a posited statistical model.  This is the goal of the present paper.  

We do not consider ourselves ``anti-statistical model.'' There are many good reasons to work within a model-based framework, including interpretability, computational and statistical efficiency, and convenience.  Indeed, within the context of a statistical model, one has a likelihood function which can be used to make likelihood-based inference.  This includes maximum likelihood estimation, likelihood ratio testing, and the asymptotic efficiency properties that these methods enjoy.  In terms of probabilistic inference, both the Bayesian and generalized fiducial \citep{fisherfiducial, MainHaning} frameworks rely heavily on the likelihood function.\footnote{There are variations on the Bayesian framework that can allow for inference on real quantities of interest \citep[see][and the references therein]{martin.syring.chapter2022} but we will not discuss this here.}  Consequently, inference about, say, expected loss minimizers under these frameworks is necessarily {\em indirect} and would put users at risk of model misspecification bias.  The latter criticism can be at least partially remedied by making the model ``nonparametric'' in the sense of, e.g., \citet{wasserman2006all}, where the model parameter is the distribution itself (or some other infinite-dimensional object).  Even this can be handled in a probabilistic way using Bayesian nonparametrics \citep{ghosh2003bayesian, hhmw2010, ghosal.vaart.book} or generalized fiducial \citep{cuihanning2019}.  While this might address the issue of model misspecification bias, it does so by making any inference about a ``real'' inferential target even less direct---inference about an infinite-dimensional object must be made first and then an extreme marginalization to the often low-dimensional inferential target carried out.  This adds complexity and negatively affects the interpretability and computational/statistical efficiency that originally motivated the model-based approach.  

There is another interpretation of ``nonparametric'' \citep[e.g,][]{conover1971practical} that is more in line with our perspective here.  These methods allow for inference about certain inferential targets with no/minimal model assumptions---classical examples include the sign and signed-rank tests.  The advantage of these methods is that they are {\em direct} in the sense that they do not attempt anything more than to answer specific questions about the inferential target.  Unfortunately, these classical nonparametric statistical methods are tailored to very specific problems and, to our knowledge, do not readily extend to allow for probabilistic uncertainty quantification in any general or systematic way.  In this paper---an extended version of  \citet{CellaMartinBelief2021}---we aim to develop a framework which is general/flexible enough to handle modern machine learning applications while simultaneously providing (imprecise) probabilistic uncertainty quantification about the inferential target with reliability guarantees (at least approximately).

More specifically, our goal in the present paper is to develop a (generalized) inferential model, or IM, a framework for direct---and valid---probabilistic inference on statistical functionals that are not parameters of a posited statistical model. In general, an IM is mapping that takes as input the observed data, along with any other relevant information about the data-generating process, and returns as output a lower and upper probability pair to be used for quantifying uncertainty about the unknowns; see \citet{mainMartin,martinbook} for the first considerations, and \citet{MARTIN2019IJAR, imprecisefrequentist} for a modern perspective. The need for a lower and upper probability pair, instead of just a single probability like in the Bayesian and fiducial frameworks, is to ensure that inference drawn from the IM are reliable, or {\em valid}.  Further background on this is presented in Section~\ref{S:IM} below.  Despite the benefits of having provably valid probabilistic uncertainty quantification, the original Martin--Liu construction has a shortcoming: like the Bayesian and other fiducial-like frameworks, it relies on a statistical model to define the quantity of interest and characterize its relationship to the observable data.  Therefore, direct inference on quantiles or other statistical functionals would appear to be out of reach.  

To close this gap, we draw inspiration from the work presented in \citet{martin2015, MARTIN2018105} and, more recently, in \citet{CellaMartinConformal,CellaMartinISIPTA21}, towards relaxing the requirement that a connection between observable data and quantities of interest be described via a data-generating process.  The first applications of this idea focused on streamlining the IM construction, but these still rely on specification of a statistical model.  Our key observation is that, by eliminating the requirement that the user start by writing out the data-generating process, we create an opportunity to construct valid probabilistic uncertainty quantification without the specification of a statistical model.  

After some background on IMs and generalized IMs in Section~\ref{S:IM}, we turn to the problem of inference on statistical functional defined either as the minimizer of an expected loss or as the solution of an estimating equation \citep[e.g.,][]{godambe1991estimating,huber1964,huber1981robust}; see Section~\ref{ss:nonparval} for the relevant definitions. The simple quantile example mentioned above fits in this framework, as do many modern machine learning problems.  Having made this connection, it is relatively simple, at least in principle, to construct a generalized IM that is exactly valid in the sense described below.  The output of this generalized IM, defined in Section~\ref{SS:gim.naive}, takes the form of a data-dependent consonant belief/plausibility function or, equivalently, a necessity/possibility measure pair and, as consequence of the validity property, confidence regions derived from this output achieve the nominal coverage probability exactly, for all sample sizes.  Unfortunately, this simple construction is impractical because it depends on aspects of the problem that would be unknown in every real-world application.  To overcome this, in Section~\ref{SS:gim.real}, we leverage the powerful bootstrap machinery \citep[e.g.,][]{efron1979} to construct a principled approximation to the aforementioned generalized IM.  With the introduction of bootstrap, exact validity cannot be achieved, but we prove, in Section~\ref{SS:asymptotic}, that the bootstrap-based generalized IM is approximately valid in the large-sample limit. To our knowledge, this is the first general implementation of (asymptotically) valid, prior- and model-free probabilistic uncertainty quantification. Illustrations are presented in Section~\ref{S:Examples}, including, quantile regression, a classical ``model-free'' application.  We also consider, in Section~\ref{S:dtr}, an application of the proposed generalized IM approach to a relevant problem in personalized medicine, namely, {\em dynamic treatment regimes} \citep[e.g.,][]{dynamicregimes}. We conclude in Section~\ref{S:Conclusion} with a brief summary and discussion of some open problems.

\section{Background on IMs}
\label{S:IM}


Let $Z_i$, for $i=1,\ldots,n$, denote data points taking values in a space $\ZZ$, and let $Z^n = (Z_1,\ldots,Z_n) \in \ZZ^n$.  Here the space $\ZZ$ is very general, so, for example, this covers the case where $Z_i = (X_i, Y_i)$ is a predictor and response variable pair, where $X_i$ could be high-dimensional.  In this background section, we introduce a statistical model, which is a collection of probability distributions, $P_\omega^n$, for $Z^n$, indexed by a parameter $\omega \in \Omega$.  The key point is that the parameter $\omega$ determines {\em everything} about the distribution of $Z^n$.  Consequently, if the real quantity of interest is some feature, $\theta \in \Theta$, of the $Z^n$ distribution, then $\theta$ would be expressed as a function of $\omega$, i.e., $\theta=\theta(\omega)$.  For example, if the model is Gaussian, so that $\omega = (\mu, \sigma)$ is the mean and standard deviation pair, and if the inferential target $\theta$ is the 0.75-quantile, then $\theta = \mu + \sigma z_{0.75}$, where $z_{0.75}$ is the corresponding quantile of the standard normal distribution.  Regardless of what form the mapping $\omega \to \theta$ takes, inferences about $\theta$ would be obtained by applying this mapping to inferences about $\omega$. For example, if a confidence region for $\omega$ were available, then its image under the mapping $\omega \to \theta$ would be a corresponding confidence region for $\theta$.  


In this paper, inferences are based on data-dependent, probabilistic quantifications of uncertainty---or what \citet{MARTIN2019IJAR, imprecisefrequentist} refers to as an {\em inferential model} (IM).  An IM is a mapping that takes the observed data $Z^n=z^n$ and the information encoded in the statistical model to a sub-additive capacity \citep{choquet1953} defined on a collection of subsets of $\Omega$, say, the Borel $\sigma$-algebra.  Specifically, a capacity $\gamma$ is a set function that satisfies $\gamma(\varnothing) = 0$, $\gamma(\Omega)=1$, and is monotone: $A \subseteq B$ implies $\gamma(A) \leq \gamma(B)$; sub-additivity requires that $\gamma(A \cup B) \leq \gamma(A) + \gamma(B)$ whenever $A \cap B = \varnothing$. Of course, probability measures are capacities, so the familiar frameworks like Bayesian, fiducial \citep{fisherfiducial}, generalized fiducial \citep{MainHaning}, structural \citep{fraser1968structure}, and confidence distributions \citep{mainconfdist,schweder_hjort_2016} are IMs in this sense.  However, capacities are more general than ordinary probabilities, so an IM's output could also take the form of, say, a plausibility function \citep{shafer1976mathematical, denoeux2014, dempster1968a, dempster.copss}, a possibility measure \citep{dubois.prade.book}, or something else more complicated.  For the observed data $Z^n=z^n$, relative to the posited statistical model, denote the IM's capacity by $\uPi_{z^n}$.  Define its dual/conjugate as 
\[ \lPi_{z^n}(B) = 1 - \uPi_{z^n}(B^c), \quad B \subseteq \Omega, \]
and note that sub-additivity implies $\lPi_{z^n}(B) \leq \uPi_{z^n}(B)$.  For this reason, the IM's output can be referred to as a pair $(\lPi_{z^n}, \uPi_{z^n})$ of lower and upper probabilities. If the IM output is additive, not just sub-additive, then $\lPi_{z^n}$ and $\uPi_{z^n}$ are equal and we are back to the more familiar Bayesian or fiducial case.  The motivation for non-additivity will be explained below. Observe that, through the mapping $\omega \to \theta$, assertions about $\theta$ correspond to assertions about $\omega$, so we can quantify uncertainty about $\theta$ via marginalization, e.g.,  
\[ \uPi_{z^n}(\{\omega: \theta(\omega) \in A\}), \quad A \subseteq \Theta. \]
This formalizes our above description of how inferences about $\omega$ are mapped to $\theta$.  

The interpretation of the IM output is as follows.  The sets $B \subseteq \Omega$ are assertions or hypotheses about $\omega$ and $\lPi_{z^n}(B)$ and $\uPi_{z^n}(B)$ are lower and upper probabilities for the claim ``$\omega \in B$'' based on the given data $Z^n=z^n$ and the posited statistical model.  If $\lPi_{z^n}(B)$ were large, then the data $z^n$ strongly supports the claim; alternatively, if $\uPi_{z^n}(B)$ is small, then the data $z^n$ strongly contradicts the claim. For situations in between, in which $\lPi_{z^n}(B)$ and $\uPi_{z^n}(B)$ are relatively small and large, respectively, the data is not sufficiently informative to support or contradict the claim. In such situations, the data analyst ought to consider a ``don’t know'' conclusion \citep[e.g.,][]{DEMPSTER2008365} and either collect more informative data or shift focus to a less complex assertion.

The quality of an IM is determined by the reliability of inferences drawn from it, so we are concerned with the statistical properties of the IM's output, i.e., on the properties of $\uPi_{Z^n}$ as a function of data $Z^n \sim P_\omega^n$.  We focus here on the upper probability just for brevity; all of what follows could also be described in terms of the lower probability, and we show both in our presentation of the new developments in Section~\ref{S:bootIM}.  The basic idea is as follows.  Based on the interpretation of the IM output described above, erroneous inference could be made if, for example, to an assertion $B$ that happened to be true, the IM assigned small $\uPi_{Z^n}(B)$.  An IM would be unreliable if such erroneous inferences were not controllably rare, so the validity property is designed specifically to provide the control necessary to make its inferences reliable. This is completely in line with Fisher's logic behind his tests of significance \citep[][p.~42]{fisher1973}.  More formally, an IM with output $\uPi_{Z^n}$ is said to be {\em valid} if 
\begin{equation}
\label{eq:valid.p}
\sup_{\omega \in B} P^n_{\omega}\{ \uPi_{Z^n}(B) \leq \alpha\} \leq \alpha, \quad \text{for all $\alpha \in [0,1]$ and all $B \subseteq \Omega$}.
\end{equation}
In the above expression, the true $\omega$ is contained in $B$, so $B$ is a ``true'' assertion.  Then the event $\{\uPi_{Z^n}(B) \leq \alpha\}$ is potentially problematic, especially when $\alpha$ is small, as it corresponds to a case where the inferences drawn could be wrong.  However, the right-most inequality in \eqref{eq:valid.p} ensures that this potentially problematic event has an explicit and relatively small probability under the posited model.  This calibration makes it possible for the IM to avoid the ``unacceptable'' and ``systematically misleading conclusions'' that \citet{ReidandCox2015} warn us about. The validity condition \eqref{eq:valid.p} can also be compared to the (slightly weaker) {\em fundamental frequentist principle} in \citet{Walley2002ReconcilingFP}.  Of course, the validity property as stated above is relative to the posited statistical model and, therefore, if that model happens to be wrong in some sense, then property \eqref{eq:valid.p} is meaningless; it is precisely for this reason that we look to extend the IM construction and corresponding validity property beyond those idealized cases where there is a statistical model and it is assumed to be correctly specified.  

There are a number of desirable consequences of the validity property.  First, since \eqref{eq:valid.p} includes the ``for all $B \subseteq \Omega$'' clause, the validity property carries over immediately to marginal inferences on the quantity of interest $\theta$.  Second, one can readily derive statistical procedures---hypothesis tests and confidence regions---from the IM's output, and the validity property guarantees that these will control the frequentist error rates of those procedures.  More details about this will be presented in Section~\ref{S:bootIM}.  

This brings us to the motivation for considering non-additive IMs.  It turns out that {\em IMs whose output is additive cannot be valid}.  This is the so-called {\em false confidence theorem} of \citet{Ryansatellite}; see, also, \citet{MARTIN2019IJAR, martin.partial}. Demonstrations of the challenges with additive IMs, especially for marginal inference about a function $\theta=\theta(\omega)$ of the full model parameter $\omega$, can be found in \citet{Fraser2011}, \citet{imprecisefrequentist}, \citet{cunen2020}, and \citet{ryanresponsecunen}, so we will not reproduce the details here.  The point is, in order to ensure validity and to enjoy its desirable consequences, it is necessary to consider genuinely non-additive IMs.  

This begs the question: how to construct a valid IM?  The first constructions were presented in \citet{mainMartin, martinbook}, and \citet{MARTIN2019IJAR} gives a detailed overview.  The original formulation started by expressing the statistical model in terms of a functional relationship, $Z^n = a(\omega, U^n)$, between data $Z^n$, unknown parameter $\omega$, and an unobservable auxiliary variable, say, $U^n$.  This is effectively the same starting point as fiducial, but Martin and Liu's approach differs in how this auxiliary variable is handled.  At the observed value $z^n$ of $Z^n$, the expression becomes $z^n = a(\omega, u^n)$, where $u^n$ is the unobserved value of $U^n$, so the question turns to how we can quantify uncertainty about the fixed, unobserved value $u^n$.  On the one hand, a fiducial approach quantifies uncertainty about $u^n$ using the {\em a priori} distribution of $U^n$, which leads to a $z^n$-dependent probability distribution for $\omega$ that does not satisfied the validity property in \eqref{eq:valid.p}.  On the other hand, Martin and Liu argue that the epistemological status of a fixed, unobserved value of a random variable is very different from a random variable and, therefore, uncertainty about $u^n$ should be quantified with something different/more conservative than a probability distribution.  Their original proposal used random sets \citep[e.g.,][]{nguyen2006introduction, molchanov2005} to quantify uncertainty about $u^n$ but, more recently, it was recognized that quantifying uncertainty using possibility measures was a more direct route to a valid IM \citep{imposs}. Moreover, the latter construction ensures that the IM's output is also a possibility measure, and \citet{imprecisefrequentist} argued that valid IMs of this form are the most efficient.  Therefore, we will focus here on the case where the IM output takes the form of a possibility measure.  To avoid repetition, we save the details of this construction for Section~\ref{S:bootIM}---the novelty in the main results section of this paper is in dealing with the statistical model-free context, not the basic steps of the IM construction.



\section{Direct model-free probabilistic inference}
\label{S:bootIM}

\subsection{Setup}
\label{ss:nonparval}

The discussion in the previous section focused on the case where a statistical model was specified, i.e., that the data $Z^n = (Z_1,\ldots,Z_n)$ had a distribution $P_\omega^n$ indexed by a parameter $\omega \in \Omega$.  Suppose, however, that the model parameter, $\omega$, is not directly of interest.  Instead, the goal is inference on some feature $\theta$ of the underlying distribution.  Under the posited model, $\theta=\theta(\omega)$ is a function of $\omega$, and marginal inference can be carried out more or less as usual.  The main obstacle is that forcing $\theta$ to be a function of the model parameter, $\omega$, is potentially restrictive, since the model could be misspecified.  As a somewhat extreme example, suppose the quantity of interest, $\theta$, is the variance of the distribution of $Z_1$.  If we model this with a Poisson distribution having rate parameter $\omega > 0$, then the usual estimator of $\theta$ would be the sample mean, which would be a poor estimate of the variance if the distribution is not Poisson.  

To avoid the risk of model misspecification bias, we opt to proceed without specifying a model. That is, we assume $Z^n=(Z_1,\ldots,Z_n)$ consists of independent and identically distributed (iid) components with $Z_i \sim P$; the joint distribution of $Z^n$ is denoted by $P^n$.  Note that $P$ is free to be any distribution, no constraints due to dependence on a parameter $\omega$.  In this more general case, the quantity of interest $\theta=\theta(P)$ is a functional of the underlying distribution.  And since there is no restriction on $P$, there is similarly no restriction on $\theta$, hence no risk of model misspecification bias.  

So far, we have said very little about what specifically the quantity of interest, $\theta$, is.  This will be important in what follows so, to end this problem-setup section, we give some further details about the origins of $\theta$. 
\begin{itemize}
\item Start with a loss function $\ell_\vartheta(z)$ that takes pairs $(\vartheta,z) \in \Theta \times \ZZ$ to real numbers.  This loss function is a measure of the compatibility of a data point $z$ with a generic value $\vartheta$, with large values of $\ell_\vartheta(z)$ indicating less compatibility.  Then the inferential target is defined as the minimizer of the expected loss, i.e., 
\[ \theta = \arg\min_{\vartheta \in \Theta} R(\vartheta), \quad \text{where} \quad R(\vartheta) = \int \ell_\vartheta(z) \, P(dz). \]
To estimate $\theta$ based on data $z^n$ from $P$, one defines an empirical version of the risk and take $\hat\theta_{z^n}$ to be the corresponding minimizer, i.e., 
\[ \hat\theta_{z^n} = \arg\min_{\vartheta \in \Theta} R_{z^n}(\vartheta), \quad \text{where} \quad R_{z^n}(\vartheta) = \frac1n \sum_{i=1}^n \ell_\vartheta(z_i). \]
This framework is called {\em M-estimation}. 

\item Alternatively, start with a (vector-valued) function $\psi_\vartheta(z)$ and define the inferential target as the root of the expectation, i.e., $\theta$ is a solution to the (vector) equation 
\[ \Psi(\vartheta) = 0, \quad \text{where} \quad \Psi(\vartheta) = \int \psi_\vartheta(z) \, P(dz). \]
As above, to estimate $\theta$ based on data $z^n$ from $P$, define a corresponding empirical version of the expectation and take $\hat\theta_{z^n}$ to be a solution to the equation 
\[ \Psi_{z^n}(\vartheta) = 0, \quad \text{where} \quad \Psi_{z^n}(\vartheta) = \frac1n \sum_{i=1}^n \psi_\vartheta(z_i). \]
The above equation is sometimes referred to as an {\em estimating equation}, and this general framework is called {\em Z-estimation}. Some authors, including \citet[][Chap.~7]{boos.stefanski.2013}, do not distinguish between M- and Z-estimation. 
\end{itemize}

The familiar maximum likelihood framework is a special case of M- and sometimes Z-estimation.  Suppose the statistical model $P_\theta$ is indexed by $\theta$, and that $p_\theta$ is the density function.  Then $\ell_\vartheta(z) = -\log p_\vartheta(z)$ would be a loss function and the corresponding M-estimator is the maximum likelihood estimator.  Similarly, if interchange of derivatives and integrals is allowed, then $\psi_\vartheta(z) = -\frac{\partial}{\partial\vartheta} \log p_\vartheta(z)$ can be used to express the maximum likelihood estimator as a Z-estimator.  But there are many other M- and Z-estimators in the literature, this is just one very familiar case. 

In the following two subsections, we describe this paper's proposal to provide (approximately) valid and distribution-free probabilistic inference through a so-called {\em generalized IM}.  We present this in two steps, starting in Section~\ref{SS:gim.naive} with the main idea in order to develop intuition.  The real proposal and its justification comes in Section~\ref{SS:gim.real}.

\subsection{Generalized IM: basic idea}
\label{SS:gim.naive}

As discussed in Section~\ref{S:IM} above, based on the original formulation at least, to construct a valid IM for $\theta$ we would require a functional relationship, \`a la \citet{dawidstone1982}, that describes how to simulate data $Z^n$ in terms of $\theta$.  In our present context, however, this is not possible because $\theta$ is not a ``model parameter'' that determines the distribution of $Z^n$, so a different approach is needed.  Fortunately, the {\em generalized IM} construction developed in \citet{martin2015, MARTIN2018105}, which was designed to address an altogether different challenge, can be modified to suit our present needs.  

First, we will need a function that will rank generic values $\vartheta$ of $\theta$ in terms of how well they align with the data $Z^n=z^n$.  Denote this measure by $T_{z^n}(\vartheta)$.  Throughout we will assume that $\vartheta$ having smaller values of $T_{z^n}(\vartheta)$ are higher ranked in terms of how well they align with the data $z^n$.  Naturally, the definition of the $T$ function would take into consideration how the functional, $\theta$, is defined.  For example, if there was a statistical model, $P_\theta^n$, indexed by $\theta$, then a natural choice of $T$ would be 
\[ T_{z^n}(\vartheta) = -\log \bigl\{ p_\vartheta^n(z^n) \, / \, p_{\hat\theta}^n(z^n) \bigr\}, \]
where $p_\vartheta^n$ is the model's density function and $\hat\theta$ is the maximum likelihood estimator.  A similar thing could be done using profile likelihoods if $\theta$ were a function $\theta=\theta(\omega)$ of the model parameter $\omega$.  Our focus here, however, is on the situation in which there is no statistical model, and the functional $\theta$ is defined as described at the end of the previous subsection.  For the case where $\theta$ is a risk minimizer, a natural choice of $T$ is 
\[ T_{z^n}(\vartheta) = R_{z^n}(\vartheta) - R_{z^n}(\hat\theta_{z^n}), \quad \vartheta \in \Theta. \]
Similarly, for cases where $\theta$ is a solution to an estimating equation, a natural choice is 
\[ T_{z^n}(\vartheta) = n\Psi_{z^n}(\vartheta)^\top \, S_{z^n}(\vartheta)^{-1} \, \Psi_{z^n}(\vartheta), \quad \vartheta \in \Theta, \]
where $S_{z^n}(\vartheta) = n^{-1}\sum_{i=1}^n \psi_\vartheta(Z_i) \, \psi_\vartheta(Z_i)^\top$ is the empirical estimate of the covariance matrix of the random vector $\Psi_{Z^n}(\vartheta)$.  The intuition behind both of these choices is that, under certain regularity conditions, the distribution of $T_{Z^n}(\theta)$---as a function of $Z^n \sim P^n$ with $\theta=\theta(P)$---would, at least approximately, be free of any unknowns.  This is not unlike what Wilks's theorem provides in the classical setting of likelihood ratio tests.  Our proposal here does not rely on these asymptotic properties and, hence, does not require regularity conditions; see Lemma~\ref{lem:gim.valid.naive}.  However, the approximate validity result in Theorem~\ref{thm:validity} does require certain regularity, which we discuss below.  

Then the basic idea behind the original generalized IM construction was to forget about establishing a functional relationship between the full data, the quantity of interest, and an unobservable auxiliary variable.  Instead, just create a link between an appropriate summary, $T_{Z^n}(\theta)$, and an unobservable auxiliary variable, say, $U$.  For problems involving a statistical model, this can be done with $T$ the log relative likelihood summary above \citep{cahoon2019generalized1, cahoon2019generalized2}; for prediction problems, this can be done with $T$ depending on the non-conformity score used in conformal prediction \citep{CellaMartinConformal,CellaMartinISIPTA21}.  Here we propose the following association 
\begin{equation}
\label{eq:assoc}
T_{Z^n}(\theta) = G^{-1}(U), \quad U \sim Q = \unif(0,1),
\end{equation}
where $Z^n \sim P^n$, $\theta=\theta(P)$, and $G=G_P$ is the distribution function of $T_{Z^n}(\theta)$, which depends on $P$; also, ``$Q = \unif(0,1)$'' means that $Q$ is the probability measure corresponding to the uniform distribution on $[0,1]$.  Throughout, we will assume that $T$ is such that $T_{Z^n}(\theta)$ has an absolutely continuous distribution, so that $G$ is strictly increasing and the inverse is well-defined.  Next, if we set $Z^n$ equal to the observed $z^n$, then the above equation becomes 
\begin{equation}
\label{eq:assoc.fixed}
T_{z^n}(\theta) = G^{-1}(u^\star), 
\end{equation}
where $u^\star$ is a fixed value, unknown to us because $P$ and, hence, $\theta$ and $G$ are unknown. To quantify uncertainty about the unobserved $u^\star$, we introduce a possibility measure $\uPi$ defined on $[0,1]$. What is unique about a possibility measure is that the upper probabilities are determined by an ordinary point function, $\pi: [0,1] \to [0,1]$, where the maximum value 1 is attained, according to the formula 
\[ \uPi(K) = \sup_{u \in K} \pi(u), \quad \text{for all $K \subseteq [0,1]$}. \]
The function $\pi$ is called the {\em possibility contour}.  To achieve validity, we cannot choose just any possibility measure---it must be {\em consistent} with $Q = \unif(0,1)$ in the sense that 
\begin{equation}
\label{eq:compatible}
Q(K) \leq \uPi(K), \quad \text{for all measurable $K \subseteq [0,1]$}, 
\end{equation}
as described in, e.g., Definition~9 of \citet{HOSE2021133}.  This choice ensures that $Q$ is in the credal set corresponding to $\uPi$.  This properties differs from stochastic dominance \citep[e.g.,][]{denoeux2009} because the inequality holds for all events $K$, not just one-sided intervals.  Since $Q$ is one of the simplest probabilities distributions, \eqref{eq:compatible} is relatively easy to arrange.  There are several different ways this can be done, but here we insist on defining $\uPi$ based on the contour
\[ \pi(u) = 1-u, \quad u \in [0,1]. \]
That this yields a possibility measure compatible with $Q=\unif(0,1)$ is easy to see: 
\[ \uPi(K) = \sup_{u \in K} (1-u) = 1-\inf K \geq \int_K \, du = Q(K). \]
The rationale behind this choice of $\pi$ is that, since ``good'' values of $\vartheta$ are those that make $T_{z^n}(\vartheta)$ small, and small values of $T_{z^n}(\vartheta)$ correspond to small values of $u$, we want $\pi(u)$ to be large for small $u$ values.  It turns out that $\pi(u)=1-u$ as above determines the {\em maximally specific} \citep[e.g.,][]{dubois.prade.1986} possibility measure $\uPi$ that is consistent with $Q=\unif(0,1)$ in the sense above.  


Following the fiducial-style logic, which is often referred to as the {\em extension principle} in this non-additive probabilistic framework \citep[e.g.,][]{Zadeh1975}, if the possibility measure $\uPi$ with contour $\pi$ is a quantification of uncertainty about $u^\star$, then we push this through \eqref{eq:assoc.fixed} to get the data-dependent possibility measure $\uPi_{z^n}$ on $\Theta$ with contour 
\[ \pi_{z^n}(\vartheta) = \pi\bigl( G(T_{z^n}(\vartheta)) \bigr) = 1 - G(T_{z^n}(\vartheta)), \quad \theta \in \Theta. \]
That is, a generalized IM for $\theta$ under this new distribution-free framework assigns upper probabilities to assertions $A$ according to the formula 
\begin{equation}
\label{eq:gim.upi.naive}
\uPi_{z^n}(A) = \sup_{\vartheta \in A} \bigl\{ 1 - G(T_{z^n}(\vartheta)) \bigr\}, \quad A \subseteq \Theta. 
\end{equation}
The corresponding lower probability is defined by conjugacy, i.e., $\lPi_{z^n}(A) = 1 - \uPi_{z^n}(A^c)$.  

In certain applications (e.g., Section~\ref{S:dtr}), there may be select features of the inferential target $\theta$ that are also of interest.  These can be represented as $\phi=\phi(\theta)$, where the notation $\phi$ is used to represent both the unknown feature and the function mapping the original inferential target to that feature.  In such cases, the extension principle can be applied again to construct a marginal IM for $\phi$ from that for $\theta$.  That is, define the possibility contour
\[ \pi_{z^n}^\phi(\varphi) = \sup_{\vartheta: \phi(\vartheta) = \varphi} \pi_{z^n}(\vartheta), \quad \varphi \in \phi(\Theta). \]
Then lower and upper probabilities for $\phi$ can be obtained as we did before for $\theta$:
\[ \uPi_{z^n}^\phi(C) = \sup_{\varphi \in C} \pi_{z^n}^\phi(\varphi) \quad \text{and} \quad \lPi_{z^n}^\phi(C) = 1 - \uPi_{z^n}^\phi(C^c), \quad C \subseteq \phi(\Theta). \]

Validity of the above-defined generalized IM, in the sense below, is a consequence of the consistency between $Q=\unif(0,1)$ and the possibility measure $\uPi$. But it is straightforward to check this property directly, as we do next.

\begin{lem}
\label{lem:gim.valid.naive}
The generalized IM above, with output determined by the possibility measure $\uPi_{Z^n}$ defined in \eqref{eq:gim.upi.naive} is valid in the sense that, for all $\alpha \in [0,1]$ and all $A \subseteq \Theta$, the two equivalent conditions hold:
\[ \sup_{P: \theta(P) \in A} P^n \bigl\{ \uPi_{Z^n}(A) \leq \alpha \bigr\} \leq \alpha \quad \text{and} \quad \sup_{P: \theta(P) \not\in A} P^n\{ \lPi_{Z^n}(A) > 1-\alpha\} \leq \alpha. \]
In particular, $\pi_{Z^n}(\theta(P)) \sim \unif(0,1)$ as a function of $Z^n \sim P^n$.
\end{lem}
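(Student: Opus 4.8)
The plan is to reduce everything to the probability integral transform; the ``in particular'' claim is the crux, and the two displayed inequalities then follow by an elementary monotonicity observation. First I would record that, by construction, $G = G_P$ is the distribution function of the real-valued random variable $T_{Z^n}(\theta(P))$ when $Z^n \sim P^n$, and that this law is absolutely continuous by the standing assumption, so $G$ is continuous. The probability integral transform then gives $G(T_{Z^n}(\theta(P))) \sim \unif(0,1)$, and since $V \sim \unif(0,1)$ implies $1 - V \sim \unif(0,1)$, we obtain $\pi_{Z^n}(\theta(P)) = 1 - G(T_{Z^n}(\theta(P))) \sim \unif(0,1)$, which is the last assertion of the lemma.

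Next I would deduce the upper-probability inequality. Fix $A \subseteq \Theta$ and any $P$ with $\theta(P) \in A$. Since $\theta(P)$ is one of the points over which the supremum defining $\uPi_{Z^n}(A)$ in \eqref{eq:gim.upi.naive} is taken, we have $\uPi_{Z^n}(A) \geq \pi_{Z^n}(\theta(P))$ pointwise in $Z^n$, hence $\{\uPi_{Z^n}(A) \leq \alpha\} \subseteq \{\pi_{Z^n}(\theta(P)) \leq \alpha\}$. Taking $P^n$-probabilities and using the uniform law above yields $P^n\{\uPi_{Z^n}(A) \leq \alpha\} \leq \alpha$; taking the supremum over all $P$ with $\theta(P) \in A$ completes this half. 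For the lower-probability inequality I would use conjugacy, $\lPi_{Z^n}(A) = 1 - \uPi_{Z^n}(A^c)$: if $\theta(P) \notin A$ then $\theta(P) \in A^c$, so the same argument gives $\uPi_{Z^n}(A^c) \geq \pi_{Z^n}(\theta(P))$ and therefore $\lPi_{Z^n}(A) \leq 1 - \pi_{Z^n}(\theta(P))$, whence $\{\lPi_{Z^n}(A) > 1-\alpha\} \subseteq \{\pi_{Z^n}(\theta(P)) < \alpha\}$ and the bound follows as before. (Equivalently, one notes that the two displayed conditions are the same statement with $A$ replaced by $A^c$, so proving one proves the other.)

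The one step deserving care is the appeal to continuity of $G$: this is not a mere convenience but is essential, and it is exactly what the blanket absolute-continuity assumption on the law of $T_{Z^n}(\theta)$ is there to provide. Indeed, for a general, possibly discontinuous, distribution function $G$ one has only $P^n\{G(T_{Z^n}(\theta(P))) \leq t\} \leq t$, so that $\pi_{Z^n}(\theta(P)) = 1 - G(T_{Z^n}(\theta(P)))$ satisfies $P^n\{\pi_{Z^n}(\theta(P)) \leq \alpha\} \geq \alpha$ --- the wrong direction --- and validity can genuinely fail. Beyond this there is no real obstacle: the proof is just the probability integral transform together with the observation that the upper probability of a set dominates the contour value at any of its points.
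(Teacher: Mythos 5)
Your proof is correct and follows essentially the same route as the paper's: the probability integral transform gives $\pi_{Z^n}(\theta(P)) \sim \unif(0,1)$, monotonicity gives $\uPi_{Z^n}(A) \geq \pi_{Z^n}(\theta(P))$ for $A \ni \theta(P)$, and the lower-probability statement follows by conjugacy. Your added remark on why continuity of $G$ is essential (and how validity fails in the discontinuous case) is accurate but goes beyond what the paper's proof records.
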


\begin{proof}
We give the proof for the claim in terms of the upper probability; the lower probability claim follows from this and conjugacy.  Fix $P$ and let $\theta=\theta(P)$. For any $A$ that contains $\theta$, monotonicity implies that 
\[ \uPi_{Z^n}(A) \geq \uPi_{Z^n}(\{\theta\}), \]
and, for the possibility measure version above, the right-hand side is simply $\pi_{Z^n}(\theta)$, i.e., the possibility contour evaluated at $\theta$.  But since $\pi_{Z^n}(\theta) = 1 - G(T_{Z^n}(\theta))$ and $G$ is the distribution function of $T_{Z^n}(\theta)$, it follows immediately that $\pi_{Z^n}(\theta) \sim \unif(0,1)$, as a function of $Z^n \sim P^n$. Therefore, 
\[ P^n\{ \uPi_{Z^n}(A) \leq \alpha \} \leq P^n\{ \pi_{Z^n}(\theta) \leq \alpha \} = \alpha, \]
and, since this holds for all $A$ and for all $P$ such that $\theta(P) \in A$, the claim follows. 
\end{proof}

We discussed above, in Section~\ref{S:IM}, the practical interpretation of the validity property.  There is another interpretation that readers familiar with the imprecise probability literature might be more comfortable with.  If one interprets the IM output as a credal set of probability distributions consistent with $\uPi_{z^n}$ in the sense of \eqref{eq:compatible}, then the upper probability version of the validity property states that the event ``{\em all the probabilities in the credal set} assign mass $\leq \alpha$ to the true assertion $A$'' is controllably rare.  This interpretation also helps to explain why, despite the use of possibility measures, etc., we can still claim that IMs offer ``probabilistic'' uncertainty quantification. 

The following corollary gives an important consequence of the generalized IM's validity property.  That is, denote the $\alpha$ level sets of the possibility contour as 
\begin{equation}
\label{eq:gim.conf.naive}
{\cal P}_\alpha(z^n) = \{\vartheta \in \Theta: \pi_{z^n}(\vartheta) > \alpha\}, \quad \alpha \in [0,1]. 
\end{equation}
We refer to these as $100(1-\alpha)$\% plausibility regions for $\theta$, i.e., these are collections of ``sufficiently plausible'' values of $\theta$ based on data $z^n$.  Validity implies that these are also nominal confidence regions.  

\begin{cor}
The generalized IM's plausibility regions in \eqref{eq:gim.conf.naive} are nominal confidence regions in the sense that 
\[ \sup_P P^n\bigl\{ {\cal P}_\alpha(Z^n) \not\ni \theta(P) \bigr\} \leq \alpha, \quad \alpha \in [0,1]. \]
\end{cor}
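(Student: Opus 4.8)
The plan is to reduce the coverage statement to the last assertion of Lemma~\ref{lem:gim.valid.naive}, so this is essentially a one-line corollary. First I would fix an arbitrary $P$ and write $\theta = \theta(P)$. By the definition of the level sets in \eqref{eq:gim.conf.naive}, which are \emph{open} in the sense that they use the strict inequality $\pi_{z^n}(\vartheta) > \alpha$, the event that the plausibility region misses the truth is exactly the complementary event
\[
\bigl\{ {\cal P}_\alpha(Z^n) \not\ni \theta \bigr\} = \bigl\{ \pi_{Z^n}(\theta) \leq \alpha \bigr\}.
\]

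Next I would invoke Lemma~\ref{lem:gim.valid.naive}, whose final sentence states that $\pi_{Z^n}(\theta) \sim \unif(0,1)$ as a function of $Z^n \sim P^n$; recall this uses the standing assumption, made just after \eqref{eq:assoc}, that $T_{Z^n}(\theta)$ has an absolutely continuous distribution, so $G$ is a genuine continuous distribution function and $\pi_{Z^n}(\theta) = 1 - G(T_{Z^n}(\theta))$ really is uniform. Consequently $P^n\{\pi_{Z^n}(\theta) \leq \alpha\} = \alpha$ for every $\alpha \in [0,1]$, and in particular this probability is $\leq \alpha$. Since $P$ was arbitrary, taking the supremum over all $P$ gives the claimed inequality. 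Equivalently, one may observe that $\pi_{z^n}(\theta) = \uPi_{z^n}(\{\theta\})$ by \eqref{eq:gim.upi.naive}, so the event above is $\{\uPi_{Z^n}(\{\theta(P)\}) \leq \alpha\}$, and then apply the upper-probability form of the validity inequality in Lemma~\ref{lem:gim.valid.naive} with the singleton assertion $A = \{\theta(P)\}$, which trivially satisfies $\theta(P) \in A$.

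I do not expect any real obstacle here; the only point requiring care is the bookkeeping of the strict versus non-strict inequality. Because the level sets are defined with ``$>\alpha$,'' the miss event is exactly $\{\pi_{Z^n}(\theta) \leq \alpha\}$, which under the uniform law has probability precisely $\alpha$, yielding the bound (indeed with equality, so the plausibility regions are exact, not merely conservative, confidence regions). Had the level sets instead been closed, defined via ``$\geq \alpha$,'' the same argument would still deliver the $\le \alpha$ bound, just without the exactness.
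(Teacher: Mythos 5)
Your proposal is correct and follows exactly the paper's own argument: identify the miss event $\{{\cal P}_\alpha(Z^n) \not\ni \theta\}$ with $\{\pi_{Z^n}(\theta) \leq \alpha\}$ via the strict inequality in \eqref{eq:gim.conf.naive}, then apply the uniformity conclusion of Lemma~\ref{lem:gim.valid.naive}. The extra observations about exactness and the singleton-assertion reformulation are fine but not needed.
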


\begin{proof}
Fix $P$ and let $\theta=\theta(P)$.  Then it is easy to see that ${\cal P}_\alpha(Z^n) \not\ni \theta$ if and only if $\pi_{Z^n}(\theta) \leq \alpha$.  Then the claim follows immediately from Lemma~\ref{lem:gim.valid.naive}. 
\end{proof}

\begin{remark0}
For an IM whose output is a possibility measure, a stronger notion of validity can be established.  Indeed, virtually the same proof as that above shows 
\begin{equation}
\label{eq:valid.uniform}
\sup_P P^n\{ \text{$\uPi_{Z^n}(A) \leq \alpha$ for some $A \ni \theta(P)$} \} \leq \alpha, \quad \alpha \in [0,1]. 
\end{equation}
To be clear, ``for some $A \ni \theta(P)$'' corresponds to a union\footnote{In general, this is an uncountable union, so its measurability would not be automatic.  But it can be readily seen from the argument in the proof of Lemma~\ref{lem:gim.valid.naive} that the uncountable union equals ``$\pi_{Z^n}(\theta) \leq \alpha$,'' so there is in fact no measurability issue.} over all such $A$.  Therefore, the event above is much larger event than that for a fixed $A$ that contains $\theta(P)$.  So the $\alpha$ upper bound on the probability of a larger event makes for a stronger validity conclusion.  In practice, this stronger notion of validity ensures that erroneous conclusions are controllably rare not just in ideal cases where assertions $A$ are specified in advance, but also in the more challenging scenarios where data are used to determine which assertions are to be evaluated.  This extension is possible due to the uniformity in \eqref{eq:valid.uniform} being inside the event, which means that it is a rare event that the data analyst can even find a (possibly data-dependent) true assertion to which the IM would assign small upper probability.  Since assigning small upper probability to a true assertion corresponds to a case where inference might be erroneous, the uniformity baked in to \eqref{eq:valid.uniform} provides the data analyst some additional comfort and security.
\end{remark0}


While the formulation just described is simple and achieves the desirable validity property exactly, there is one major problem: its implementation requires knowledge of the distribution function $G$.  Even if a parametric model for $P$ was known to be true, it would be completely unrealistic to expect the distribution of $T_{Z^n}(\theta)$ to be known, so this would never be the case in our present situation where no statistical model is assumed.  Next, we put forth a practical version of the generalized IM approach described above.

\subsection{Generalized IM: practical construction} 
\label{SS:gim.real}

The above formulation is deceptively simple. The obstacle hidden in that presentation is the fact that the distribution function $G$---based on the distribution of the complicated function $T_{Z^n}(\theta)$, for $Z^n \sim P^n$, with $\theta=\theta(P)$---is unavailable.  Fortunately, we can overcome this obstacle by making use the powerful {\em bootstrap} procedure developed in the seminal work by \citet{efron1979}.  The basic idea behind the bootstrap is that iid samples from the empirical distribution of the observed data $z^n$ should closely resemble iid samples from $P$. Our proposal, therefore, is to approximate the unknown distribution $G$ using this bootstrap strategy.  The details of this boostrap-based generalized IM proposal, and its (approximate) validity, are presented in this and the following subsections.

The bootstrap requires an extra level of randomization and proceeds as follows.  Our presentation below, which is based on that in \citet{kosorok2008introduction}, may look a bit different from the ``sample with replacement from the observed data'' common in the literature, but rest assured that it is the same.  Let $\xi=(\xi_1,\ldots,\xi_n)$ denote a random $n$-vector, independent of the data $Z^n$, with a multinomial distribution, $\Pboot = \mult_n(n^{-1} 1_n)$. Then we define a corresponding bootstrap version of the quantity $T_{Z^n}(\theta)$, the form of which depends on whether it is based on minimizing a risk or solving an estimating equation.  Start with bootstrap versions of the driving functions $R_{z^n}$ and $\Psi_{z^n}$:
\[ R_{z^n}^\xi(\vartheta) = \frac1n \sum_{i=1}^n \xi_i \, \ell_\vartheta(z_i) \quad \text{and} \quad \Psi_{z^n}^\xi(\vartheta) = \frac1n \sum_{i=1}^n \xi_i \, \psi_\vartheta(z_i). \]
The corresponding minimizer/root will be denoted by $\hat\theta_{z^n}^\xi$.  The intuition is that the $n$-vector $\xi$ represents the number of occurrences of each original observation in the bootstrap replicate.  The above expressions depend on the random vector $\xi$ and their distribution as a function of $\xi$ will be relevant to us here.  This distribution will be approximated in a Monte Carlo way by sampling many copies of $\xi$ from its (multinomial) distribution.  

Then the corresponding bootstrap version of $T_{z^n}(\theta)$, in the M- and Z-estimation case, respectively, is given by 
\begin{equation}
\label{eq:T.rule}
\begin{split}
T_{z^n}^\xi(\hat\theta_{z^n}) & = 
R_{z^n}^\xi(\hat\theta_{z^n}) - R_{z^n}^\xi(\hat\theta_{z^n}^\xi) \\
T_{z^n}^\xi(\hat\theta_{z^n}) & = n\Psi^\xi_{z^n}(\hat\theta_{z^n})^\top \, S_{z^n}^{\xi}(\hat\theta_{z^n})^{-1} \, \Psi^\xi_{z^n}(\hat\theta_{z^n}),
\end{split}
\end{equation}
where the matrix squeezed in the middle is $S_{z^n}^\xi(\vartheta) = n^{-1}\sum_{i=1}^n \xi_i \, \psi_\vartheta(Z_i) \, \psi_\vartheta(Z_i)^\top$.  Note the parallels between the random variables $T_{Z^n}(\theta)$ as a function of $Z^n \sim P^n$ with $\theta=\theta(P)$ fixed and $T_{z^n}^\xi(\hat\theta_{z^n})$ as a function of $\xi$ with $z^n$ fixed.  That is, in the M-estimation case, say, $\theta$ minimizes the expected loss with respect to $P$ whereas $\hat\theta_{z^n}$ minimizes the expected loss with respect to $\xi \sim \Pboot$. If we denote by $\Gboot$ the distribution function of $T_{z^n}^\xi(\hat\theta_{z^n})$ as a function of $\xi \sim \Pboot$ for fixed $z^n$, then the same argument presented in the previous subsection can be used to justify the following formula for a possibility contour:
\begin{align}
\label{eq:plausboot}
\piboot_{z^n}(\vartheta) &= 1 - \Gboot(T_{z^n}(\vartheta)) \nonumber \\
&= \Pboot\{T_{z^n}^\xi(\hat\theta_{z^n}) > T_{z^n}(\vartheta)\}, \quad \vartheta \in \Theta.
\end{align}
And since our goal is probabilistic inference about $\theta$, the theory developed in the previous subsection suggests a generalized IM whose output is a possibility measure: 
\begin{equation}
\label{eq:gim.out}
\uPiboot_{z^n}(A) = \sup_{\vartheta \in A} \piboot_{z^n}(\vartheta), \quad A \subseteq \Theta. 
\end{equation}
As before, the lower probability $\lPiboot_{z^n}$ is defined by conjugacy. 

This version of the possibility contour still is not practical, since evaluating probabilities with respect to $\Pboot$ requires a sum over all $n^n$ possible values of $\xi$.  For a practical alternative, we suggest a Monte Carlo approximation based on taking samples of $\xi$ from $\Pboot$.  That is, for a user-specified bootstrap sample size $B$, define 
\begin{equation}
\label{eq:MCplaus}
\hatpiboot_{z^n}(\vartheta) = \frac1B \sum_{b=1}^B 1\{T_{z^n}^{\xi_b}(\hat\theta_{z^n}) > T_{z^n}(\vartheta)\}, \quad \xi_b \sim \Pboot, \quad b=1,\ldots,B.
\end{equation}
For low-dimensional $\theta$, this function can be plotted to visualize our uncertainty quantification based on data $Z^n$; see Figures~\ref{fig:quantile}(a), \ref{fig:spatial_median}(b), \ref{fig:reg}(a) and \ref{fig:dynreg}(a). Moreover, the possibility measure $\uPiboot_{z^n}(A)$ can be approximated by replacing the supremum with a maximum over a user-specified grid of $\vartheta$ values. These details are summarized in Algorithm~\ref{algo:conformal}.  

If marginalization to some feature $\phi=\phi(\theta)$ of the inferential target were desired, then this can be carried out exactly as described above.  Just apply the extension principle to the possibility measure defined by the contour $\hatpiboot_{z^n}$ to get a marginal IM for $\phi$.  All the properties enjoyed by the IM for $\theta$, especially the validity property in Theorem~\ref{thm:validity}, apply equally to this marginal IM for $\phi$.  In particular, marginal plausibility regions for $\phi$ would achieve the nominal frequentist coverage probability asymptotically.  

Note that the proposed generalized IM requires very little input from the user: only the data and a specification of how the inferential target is defined, which effectively identifies $T$.  With just this essential information about the problem at hand, in particular, no statistical model specification required, the proposed method produces a full, data-dependent, probabilistic quantification of uncertainty about $\theta$, which can be used to make (approximately) valid inference; see Section~\ref{SS:asymptotic}. In Section~\ref{S:Examples}, we illustrate our proposed method with several practically relevant examples. 


\begin{algorithm}[t]
\SetAlgoLined
 initialize: data $z^n$, definition of $T_{z^n}(\cdot)$ in \eqref{eq:T.rule}, and a grid of $\vartheta$ values\; 
 \For{$b$ in $1,\ldots,B$}{
 sample $\xi^b \sim \Pboot$\;
 evaluate $T_{z^n}^{\xi_b}(\hat\theta_{z^n})$ according to \eqref{eq:T.rule}\; 
 \For{each $\vartheta$ value on the grid}{
 evaluate $\hatpiboot_{z^n}(\vartheta)$ as in \eqref{eq:MCplaus}\;
 }
 }
 return $\hatpiboot_{z^n}(\vartheta)$ for each $\vartheta$ on the grid and/or $\uPiboot_{z^n}(A) \approx \max_{\vartheta \in A \cap \text{grid}} \hatpiboot_{z^n}(\vartheta)$.
 \caption{\textbf{Direct, model-free generalized IM}}
 \label{algo:conformal}
\end{algorithm}

\subsection{Asymptotic validity}
\label{SS:asymptotic}

Here we present the (asymptotically approximate) validity property enjoyed by the proposed bootstrap-based generalized IM.  Recall that, if the distribution function $G$ were known, as in Section~\ref{SS:gim.naive}, then validity followed almost immediately, as shown in Lemma~\ref{lem:gim.valid.naive}.  So, if the bootstrap version, $\Gboot$, is an accurate approximation of $G$, then a suitable approximate validity property for the more practical bootstrap-based generalized IM will follow.  
More formally, we say that the bootstrap approximation described above of the distribution of $T_{Z^n}(\theta)$, under $Z^n \sim P^n$ with $\theta=\theta(P)$, is {\em consistent} if 
\begin{equation}
\label{eq:boot.consistent}
\sup_t \bigl| G^{(n)}(t) - \Gboot^{(n)}(t) \bigr| \to 0 \quad \text{in $P^n$-probability as $n \to \infty$}, 
\end{equation}
where, to highlight their dependence on the sample size, $G^{(n)}$ denotes the exact distribution function for $T_{Z^n}(\theta)$ and $\Gboot^{(n)}$ is its bootstrap version. While the intuition behind bootstrap consistency is clear---when $n$ is large, iid sampling from the empirical distribution of $Z^n$ should be roughly the same as iid sampling from $P$---the precise technical details are complicated and non-trivial.  Fortunately, there is a substantial body of literature on bootstrap consistency, starting with \citet{bickelfreedman81} and \citet{singh1981} and, since then, \citet{wellner.zhang.1996}, \citet{ChatterjeeBose2005}, and \citet{chenghuang2010} who deal with the general M- and Z-estimation cases; see, also, \citet{hall.book}, \citet{shao.tu.book},  \citet{vaartwellner1996}, and \citet{kosorok2008introduction} for textbook-style introductions to the bootstrap consistency theory.  The general rule of thumb is that, if the M- or Z-estimator itself is asymptotically normal, i.e., if $n^{1/2}(\hat\theta_{Z^n} - \theta)$ converges in distribution to a Gaussian limit, which is true in a wide range of applications, then the bootstrap would be consistent in the sense of \eqref{eq:boot.consistent}. 

The following theorem makes more precise our above claim that bootstrap consistency is enough to establish approximate validity of our proposed generalized IM. 

\begin{thm}
\label{thm:validity}
Suppose that the inference problem is such that the bootstrap version of the distribution of $T_{Z^n}(\theta)$, as a function of $Z^n \sim P^n$, with target $\theta=\theta(P)$, is consistent in the sense of \eqref{eq:boot.consistent}.  Then the bootstrap-based generalized IM for $\theta$, whose output is determined by the possibility measure $\uPiboot_{Z^n}$ defined in \eqref{eq:gim.out}, is approximately valid in the sense that, for all $\alpha \in [0,1]$, all $A \subseteq \Theta$, the following two equivalent properties hold:
\begin{align*}
\limsup_{n \to \infty} P^n\{ \uPiboot_{Z^n}(A) \leq \alpha\} & \leq \alpha, \quad \text{for all $P$ with $\theta(P) \in A$} \\
\limsup_{n \to \infty} P^n\{ \lPiboot_{Z^n}(A) > 1-\alpha\} & \leq \alpha, \quad \text{for all $P$ with $\theta(P) \not\in A$}. 
\end{align*}
In particular, $\piboot_{Z^n}(\theta(P)) \to \unif(0,1)$ in distribution, as $n \to \infty$, under $Z^n \sim P^n$. 
\end{thm}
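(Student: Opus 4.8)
The plan is to follow the structure of the proof of Lemma~\ref{lem:gim.valid.naive}: reduce the two displayed claims to a single statement about the ``plug-in'' random variable $\piboot_{Z^n}(\theta)$, and then absorb the discrepancy between the bootstrap contour and the exact one using the consistency hypothesis \eqref{eq:boot.consistent}.

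First I would fix $P$, write $\theta = \theta(P)$, and recall that $\uPiboot_{z^n}(A) = \sup_{\vartheta \in A} \piboot_{z^n}(\vartheta)$, so monotonicity gives $\uPiboot_{Z^n}(A) \geq \piboot_{Z^n}(\theta)$ whenever $\theta \in A$; hence $\{\uPiboot_{Z^n}(A) \leq \alpha\} \subseteq \{\piboot_{Z^n}(\theta) \leq \alpha\}$ and it suffices to prove $\limsup_{n} P^n\{\piboot_{Z^n}(\theta) \leq \alpha\} \leq \alpha$ for every $\alpha \in [0,1]$. The lower-probability statement is then free: by conjugacy $\lPiboot_{Z^n}(A) = 1 - \uPiboot_{Z^n}(A^c)$, so $\{\lPiboot_{Z^n}(A) > 1-\alpha\} = \{\uPiboot_{Z^n}(A^c) < \alpha\} \subseteq \{\uPiboot_{Z^n}(A^c) \leq \alpha\}$, and when $\theta \notin A$ we have $\theta \in A^c$, so the upper-probability bound applied to $A^c$ finishes it.

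For the core estimate, recall $\piboot_{z^n}(\vartheta) = 1 - \Gboot^{(n)}(T_{z^n}(\vartheta))$, so $\{\piboot_{Z^n}(\theta) \leq \alpha\}$ is the event $\{\Gboot^{(n)}(T_{Z^n}(\theta)) \geq 1-\alpha\}$. Since $T_{Z^n}(\theta)$ has a continuous distribution by assumption, the probability-integral transform is exact: $G^{(n)}(T_{Z^n}(\theta)) \sim \unif(0,1)$ under $Z^n \sim P^n$. Writing $D_n = \sup_t |G^{(n)}(t) - \Gboot^{(n)}(t)|$, which tends to $0$ in $P^n$-probability by \eqref{eq:boot.consistent}, we have $|\Gboot^{(n)}(T_{Z^n}(\theta)) - G^{(n)}(T_{Z^n}(\theta))| \leq D_n$, so for any $\eps \in (0,1)$, $\{\Gboot^{(n)}(T_{Z^n}(\theta)) \geq 1-\alpha\} \subseteq \{G^{(n)}(T_{Z^n}(\theta)) \geq 1-\alpha-\eps\} \cup \{D_n > \eps\}$. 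Taking $P^n$-probabilities, the first set has probability at most $\alpha+\eps$ by uniformity and the second is $o(1)$, giving $\limsup_n P^n\{\piboot_{Z^n}(\theta) \leq \alpha\} \leq \alpha+\eps$; letting $\eps \downarrow 0$ yields the claim. For the ``in particular'' statement, the same inclusion run the other way, $\{\Gboot^{(n)}(T_{Z^n}(\theta)) < 1-\alpha\} \subseteq \{G^{(n)}(T_{Z^n}(\theta)) < 1-\alpha+\eps\} \cup \{D_n > \eps\}$, gives $\limsup_n P^n\{\piboot_{Z^n}(\theta) > \alpha\} \leq 1-\alpha$, so $P^n\{\piboot_{Z^n}(\theta) \leq \alpha\} \to \alpha$ for every $\alpha \in (0,1)$, i.e., $\piboot_{Z^n}(\theta) \cd \unif(0,1)$.

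The one point that needs care — conceptual rather than computational — is that the argument genuinely uses the uniform-in-$t$ form of \eqref{eq:boot.consistent}, not mere pointwise consistency: $T_{Z^n}(\theta)$ and $\Gboot^{(n)}$ are both functions of the same data $Z^n$ and cannot be decoupled, so only the sup-norm bound $D_n$ lets us replace $\Gboot^{(n)}(T_{Z^n}(\theta))$ by the exactly uniform $G^{(n)}(T_{Z^n}(\theta))$ up to an $o_P(1)$ term. Everything else — the reduction by monotonicity, the conjugacy bookkeeping for the lower-probability version, and the $\eps$-sandwich — is routine.
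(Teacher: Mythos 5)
Your proposal is correct and follows essentially the same route as the paper's proof: both reduce to the plug-in value $\piboot_{Z^n}(\theta)$, write it as the exactly uniform $\pi_{Z^n}(\theta)=1-G^{(n)}(T_{Z^n}(\theta))$ plus an error bounded by $\sup_t|G^{(n)}(t)-\Gboot^{(n)}(t)|$, and invoke \eqref{eq:boot.consistent} to kill that error. Your $\eps$-sandwich is just an explicit unwinding of the Slutsky step the paper cites, and your monotonicity/conjugacy reductions are the ones the paper leaves implicit via Lemma~\ref{lem:gim.valid.naive}.
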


\begin{proof}
Since 
\[ \pi_{Z^n}(\theta) = 1-G^{(n)}(T_{Z^n}(\theta)) \quad \text{and} \quad \piboot_{Z^n}(\theta) = 1-\Gboot^{(n)}(T_{Z^n}(\theta)), \]
we immediately get 
\begin{equation}
\label{eq:remainder}
\pi_{Z^n}^\text{boot}(\theta) = \pi_{Z^n}(\theta) + \Delta_n, 
\end{equation}
where 
\[ |\Delta_n| = \bigl| G^{(n)}(T_{Z^n}(\theta)) - \Gboot^{(n)}(T_{Z^n}(\theta)) \bigr| \leq \sup_t \bigl| G^{(n)}(t) - \Gboot^{(n)}(t) \bigr|. \]
It follows from \eqref{eq:boot.consistent} that $\Delta_n = o_P(1)$.  Since $\pi_{Z^n}(\theta) \sim \unif(0,1)$ for all $n$, from \eqref{eq:remainder} and Slutsky's theorem we get that $\piboot_{Z^n}(\theta(P)) \to \unif(0,1)$ in distribution as $n \to \infty$.  The other two properties in the theorem statement are a consequence of this. 
\end{proof}

An immediate and relevant consequence of Theorem~\ref{thm:validity} is that the bootstrap-based generalized IM's plausibility region 
\begin{equation}
\label{eq:plausint}
{\cal P}_\alpha(z^n) = \{\vartheta : \hatpiboot_{z^n}(\vartheta) > \alpha\}, \quad \alpha \in [0,1], 
\end{equation}
is also an approximate confidence region in the sense that its coverage probability is converging to the nominal level $1-\alpha$ as $n \to \infty$.  Similarly, for any $A \subset \Theta$, an asymptotic size-$\alpha$ test of a hypothesis ``$\theta(P) \in A$'' rejects the hypothesis if and only if $\uPi_{z^n}(A) \leq \alpha$. These claims also apply to the summaries---plausibility regions and tests---of the marginal IMs for features $\phi = \phi(\theta)$ derived from the IM for $\theta$.  

The theorem above is quite general, but it is not universal, i.e., there are cases when the bootstrap fails to be consistent.  These instances of bootstrap failure are associated with certain non-regularities, so our assumption \eqref{eq:boot.consistent} implicitly imposes regularity conditions on the M- or Z-estimation problem.  See Section~\ref{S:Conclusion} for more discussion about non-regular cases.  There we also address the natural question about the accuracy of the approximate validity claims in Theorem~\ref{thm:validity}.

\section{Examples}
\label{S:Examples}

The goal of the present section is to illustrate the generalized IM construction above in various practically relevant examples involving quantiles. Each of the examples follows, roughly, the same structure. We start by providing the appropriate loss function or (vector-valued) estimating equation that links the observed data to the desired quantile. We then explore the IM's basic output, the possibility contour, obtained through Algorithm~\ref{algo:conformal}, in several ways. In particular,
\begin{itemize}
    \item we plot it to visualize the plausibility of different values of the quantile of interest based on a single data set;
    \vspace{-2mm}
    \item we derive confidence regions for that quantile from it through \eqref{eq:plausint};
    \vspace{-2mm}
    \item we carry out simulation studies to confirm the IM's approximate validity, checking its behavior for a range of sample sizes.
\end{itemize}



\subsection{Quantiles}
\label{SS:quantiles}

The most intuitive example of a quantity of interest that is not most naturally defined as a model parameter is the $\tau$-th quantile, the exact point $\theta=\theta_\tau$ such that $F(\theta) = \tau$, for $\tau \in (0,1)$, where $F$ is the distribution function of a random variable $Z$. Every distribution has quantiles, but very rarely are they model parameters. Of course, one can make model-based inference on a quantile by specifying a parametric model $P_\omega$, for $\omega \in \Omega$, and defining
$\theta=\theta(\omega)$ as the corresponding quantile, but, as argued above, this creates a risk of bias due to model misspecification and/or model selection. Our approach here is model-free, so these risks/challenges are avoided. 

Suppose $Z$ has a generic distribution $P$.  Then it is well-known that a general $\tau^\text{th}$ quantile of $P$ can be defined as the minimizer of the risk function $R(\vartheta) = \int \ell_\vartheta(z) \, P(dz)$, where the loss function is given by 
\[ \ell_\vartheta(z) = \tfrac12 \bigl\{ (|z-\theta| - z) + (1-2\tau)\theta \bigr\}. \]
In the special case $\tau=0.5$, corresponding to the median, this can be reduced to 
\[ \ell_\vartheta(z) = |z - \vartheta|. \]
Consistency of the bootstrap, in the sense of \eqref{eq:boot.consistent}, is established in this quantile inference problem by both \citet{bickelfreedman81} and \citet{singh1981}. 

As an illustration, suppose that $P$ is a $\gam(4,1)$. Interest here is in the median $\theta=\theta_{0.5}$ which, in this case, is roughly equal to 3.67. Figure~\ref{fig:quantile}(a) shows the plausibility contour in \eqref{eq:MCplaus} with $B=500$ and the loss function above for a single data set $z^n$ with $n=100$. The peak is at the M-estimator, i.e., the sample median, which is close to the true median, and the horizontal line determines the corresponding 95\% plausibility interval, derived by \eqref{eq:plausint}. In order to check that approximate validity is attained, a simulation study was conducted where the above scenario is repeated 1000 times and, for each data set, $\hatpiboot_{z^n}(\theta)$ is evaluated at $\theta=3.67$. Figure~\ref{fig:quantile}(b) shows that the distribution of $\hatpiboot_{Z^n}(\theta)$ is close to $\unif(0,1)$, so approximate validity is verified. The same simulation is repeated for $\tau=0.25$ and $0.75$, showing that the approximate validity conclusion is not specific to the median. Finally, Figures~\ref{fig:quantile}(c) and (d) show that smaller sample sizes do not affect the good conclusions observed in Figure~\ref{fig:quantile}(b) too much. 

\begin{figure}[t]
\begin{center}
\subfigure[Plausibility contour in \eqref{eq:MCplaus}]{\scalebox{0.5}{\includegraphics{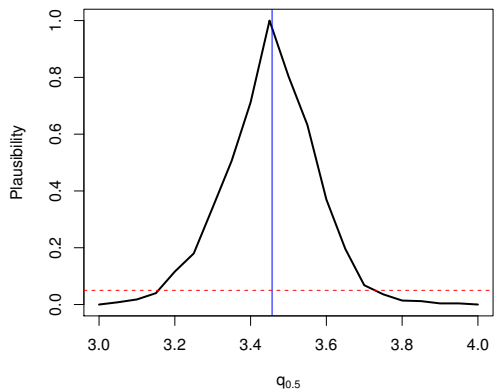}}}
\subfigure[Distribution function of $\hatpiboot_{Z^n}(\theta)$, $n=50$]{\scalebox{0.48}{\includegraphics{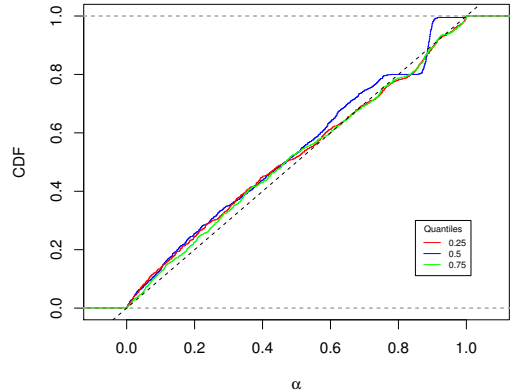}}}
\subfigure[Distribution function of $\hatpiboot_{Z^n}(\theta)$, $n=75$]{\scalebox{0.48}{\includegraphics{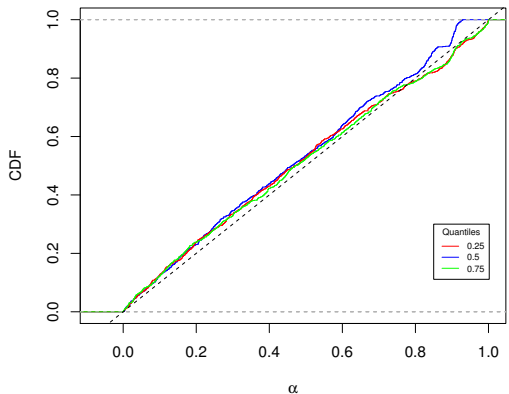}}}
\subfigure[Distribution function of $\hatpiboot_{Z^n}(\theta)$, $n=100$]{\scalebox{0.48}{\includegraphics{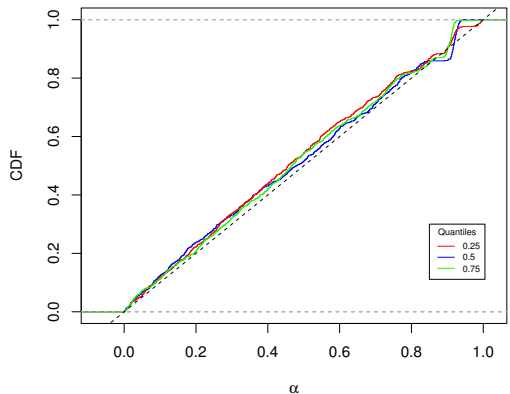}}}
\end{center}
\caption{Details from the quantile example in Section~\ref{SS:quantiles}. The results in Panel~(b), (c) and (d) are based on 1000 data replications, and shown for $\tau \in \{0.25, 0.5, 0.75\}$.}
\label{fig:quantile}
\end{figure}

There are a number of different strategies available for constructing confidence intervals for population quantiles. For further illustration, we compare our method to two of them: an exact-but-conservative solution based on the binomial distribution and a basic bootstrap procedure, which resamples the data with replacement, computes the desired quantile and then reports, for a $(1-\alpha)\%$ confidence interval, the $\frac{\alpha}{2}$ and $(1-\frac{\alpha}{2})$ quantiles of this bootstrapped distribution. For our simulation, we consider $P$ to be a Cauchy distribution with location and scale parameters equal to 2 and 1, respectively. We generated 1000 data sets of size $n=100$ and, from each, 95\% confidence intervals for $\theta_\tau$, with $\tau \in \{0.25, 0.50, 0.75\}$, based on the three methods are extracted. Table~\ref{tab:quantiles} reports the  estimated coverage probabilities and mean length of these intervals. Note that approximately validity of the generalized IM solution is confirmed.  Moreover, it is slightly more efficient than both the conservative and basic bootstrap methods.

\begin{table}[t]
\centering
\begin{tabular}{cc c c}
\hline
$\tau$ & GIM & Conservative & Bootstrap \\
\hline
$0.25$ & 0.95 (1.12) & 0.96 (1.23) & 0.96 (1.20)  \\
$0.50$ & 0.95 (0.62) & 0.98 (0.69) & 0.96 (0.64)  \\
$0.75$ & 0.93 (1.12) & 0.96 (1.25) & 0.94 (1.15)\\
\hline
\end{tabular}
\caption{Estimated coverage probabilities and mean length of 95\% confidence intervals for the quartiles based on the three following methods: generalized IM (GIM) with $B=500$; the conservative method based on the binomial distribution; the standard bootstrap method with $B=500$. The sample size is $n=100$, with data coming from a Cauchy distribution with location and scale parameters 2 and 1, respectively.}
\label{tab:quantiles}
\end{table}


\subsection{Multivariate median}
\label{SS:mult.median}

In univariate analysis, it is well known that the median is a more robust measure of the distribution's center than the mean. This is also the case in multivariate analysis. However, replacing the multivariate mean by
a multivariate median is not so straightforward. Indeed, since multivariate data do not have a natural ordering, there are various different ways of defining an order, each leading to a definition of the multivariate median or, more generally, multivariate quantiles \citep{Oja2013}. 

The most common version of a multivariate median is the {\em spatial median}.  This can be defined similar to the univariate median described above, as the minimizer of a risk function $R(\vartheta) = \int \ell_\vartheta(z) \, P(dz)$ where the loss is given by 
\[ \ell_\vartheta(z) = \|z - \vartheta\|_2 - \|z\|_2, \quad z,\vartheta \in \RR^q, \quad q \geq 1, \]
where $\|\cdot\|_2$ is the usual $\ell_2$-norm for vectors in $\RR^q$. Alternatively, the spatial median can be defined as a Z-estimator, i.e., it satisfies the system of equations $\Psi(\vartheta) = \int \psi_\vartheta(z) \, P(dz) = 0$  where $\psi_\vartheta(z)$ is a $q$-vector with components
\[\psi_{\vartheta}(z)_j = \frac{z_j - \vartheta_j}{\|z - \vartheta\|_2 }, \quad j=1,\ldots,q. \]
Consistency of the bootstrap for the multivariate median was established recently as part of Theorem~1 in \citet{bhattacharya.ghosal.2022}. 


For a quick illustration, Figure~\ref{fig:spatial_median}(a) shows the data $z_i \in \RR^2$ for $i=1,\ldots,n=200$, which are samples from bivariate normal with mean $\theta=(1,1)^\top$, unit variances, and correlation 0.7. In Figure~\ref{fig:spatial_median}(b), the plausibility contour in \eqref{eq:MCplaus} is shown, based on the $\psi_\vartheta$ function defined above, with $T$ the quadratic form in \eqref{eq:T.rule}, and $B=500$. The shaded area in Figure~\ref{fig:spatial_median}(c) represents the 95\%
plausibility region for $\theta$ derived by \eqref{eq:plausint} and, in black, the classic 95\% confidence ellipse based on the asymptotic normality. Note how the IM solution is more efficient. Figure~\ref{fig:spatial_median}(d) shows the resulting empirical distribution of the simulation study where the above scenario is repeated 1000 times and, for each data set, $\hatpiboot_{Z^n}(\theta)$ is evaluated. Approximate validity is once again verified. This is also true when a smaller sample of size $n=50$ is considered.

\begin{figure}[t]
\begin{center}
\subfigure[Scatterplot of data $z^n$]{\scalebox{0.53}{\includegraphics{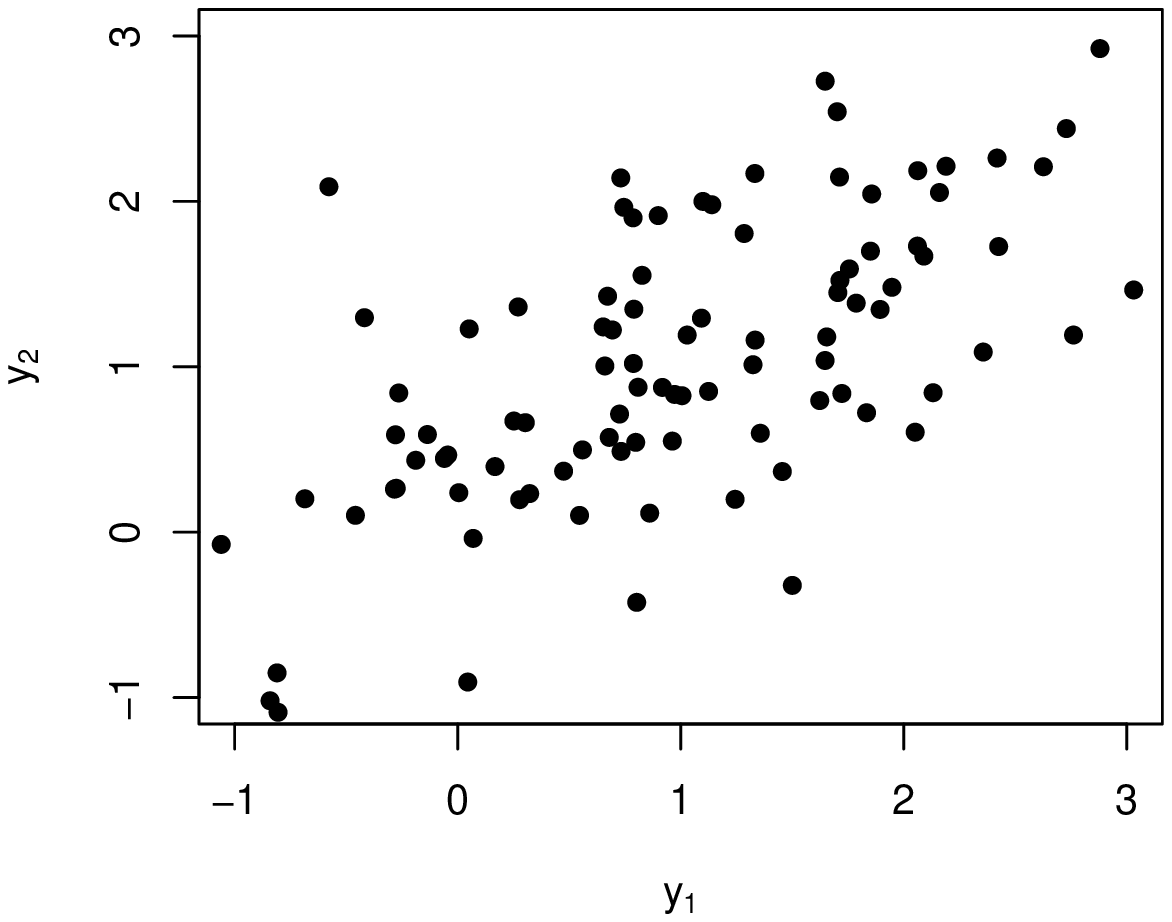}}}
\subfigure[Plausibility contour, $\vartheta \mapsto \hatpiboot_{z^n}(\vartheta)$]{\scalebox{0.54}{\includegraphics{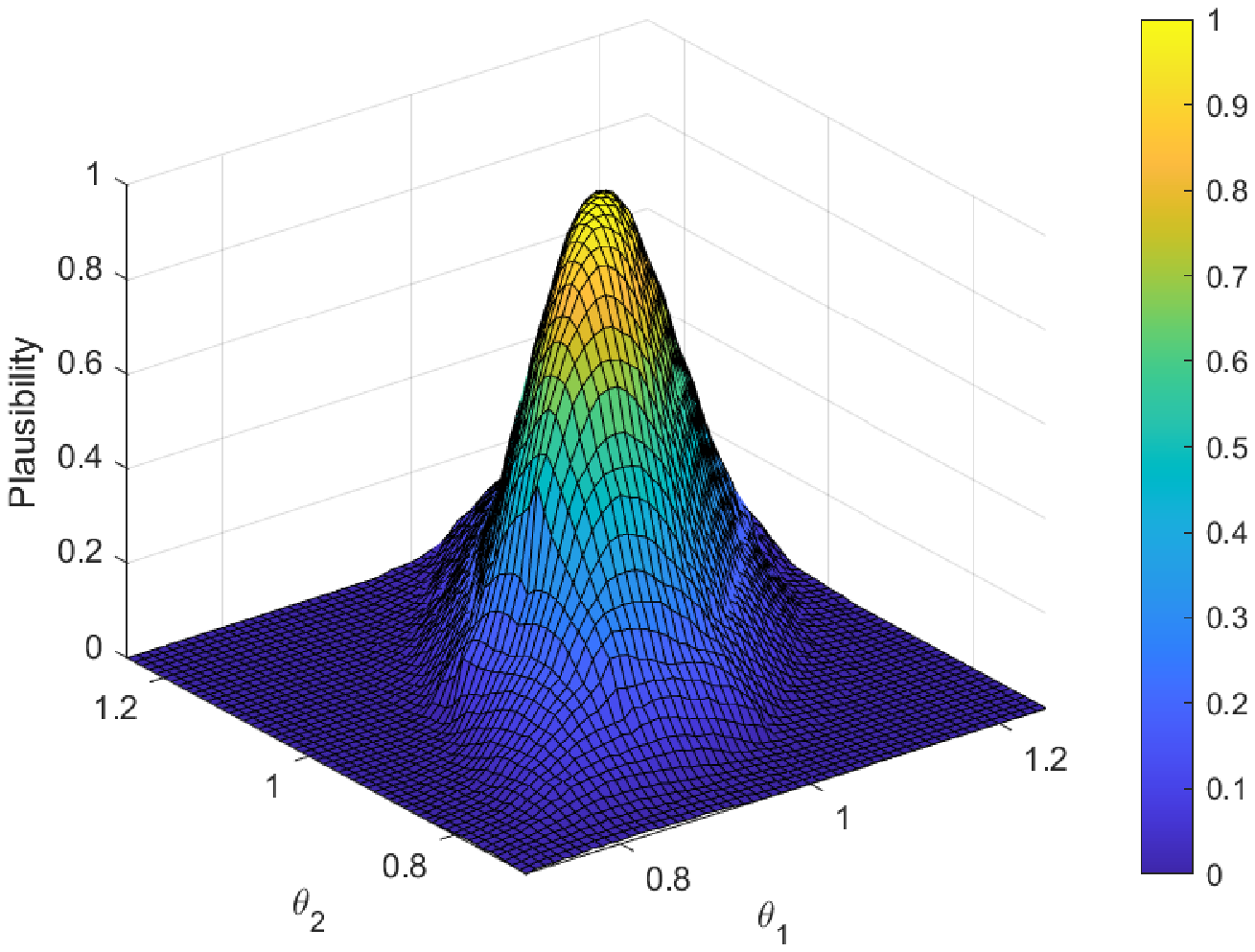}}}
\subfigure[Confidence regions for $\theta$]{\scalebox{0.57}{\includegraphics{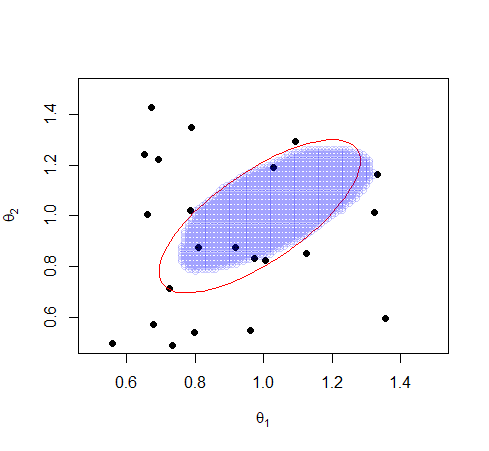}}}
\subfigure[Distribution function of $\hatpiboot_{Z^n}(\theta)$]{\scalebox{0.5}{\includegraphics{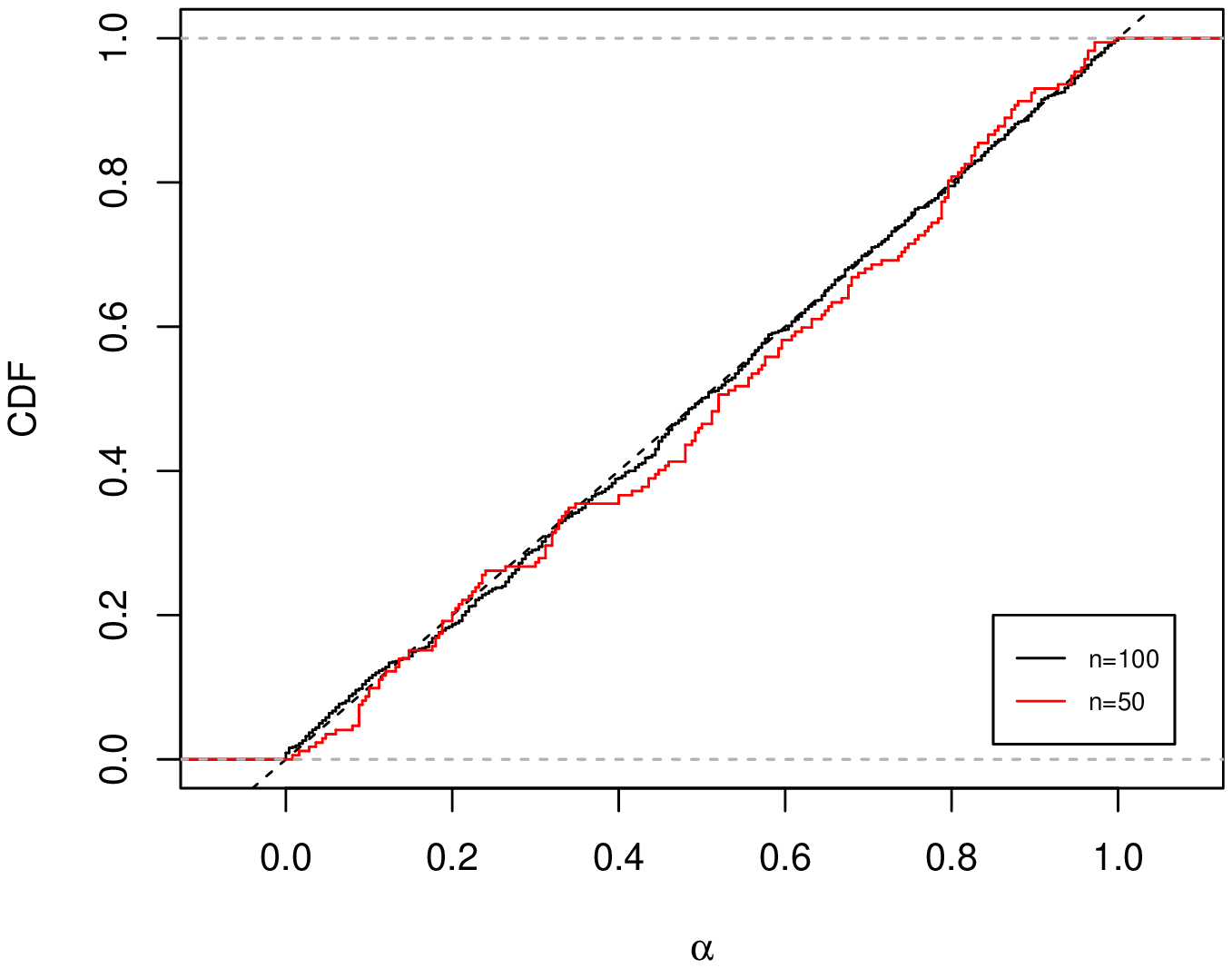}}}
\end{center}
\caption{Results for the illustration in Section~\ref{SS:mult.median}. Panel~(c): classical 95\% confidence ellipse based on asymptotic normality (red) and the 95\% plausibility region (blue).}
\label{fig:spatial_median}
\end{figure}


\subsection{Quantile regression}
\label{ss:quantile}

Let $Y$ be a response variable coupled with a covariate $X \in \RR^p$. The goal of {\em quantile regression} is to estimate the quantile for the conditional distribution of $Y$, given $X$. Fix a probability $\tau \in (0,1)$ and let $Q_x(\tau)$ denote the $\tau^\text{th}$ conditional quantile of $Y$, given $X=x$.  Then the quantile regression model says 
\[Q_x(\tau) = x^\top \theta,\]
where $\theta=\theta_\tau \in \RR^p$ is the vector of regression coefficients of interest. This ``model'' describes the functional form of the quantile function, but does not determine the distribution of $Y$, given $X=x$.  Towards making inference on $\theta$, \citet{KoenkerHallock1978} show that $\theta$ is a risk-minimizer with respect to the loss function
\[\ell_{\vartheta}(x,y) = |y - x^\top \vartheta| - (2\tau-1)x^\top \vartheta.\] 
Properties of the quantile regression M-estimator, e.g., its asymptotic normality, are investigated in \citet{koenker2005} and, in particular, bootstrap consistency is established in \citet{hahn1995}.  Here we present an illustration of the proposed generalized IM solution to the quantile regression problem.  

Let $X_i \iid \unif(0,4)$, $i=1,\ldots,n$, with $n=200$, and let $Y_i = \mu(X_i) +  \eps(X_i)$, where $\mu(x) = 4 + 0.1 x$, and $\eps(x) \sim \nm\bigl(0,(0.1 + 0.1x)^2\bigr)$. Suppose the interest is $\theta=\theta_\tau$ for $\tau=0.75$.  Figure~\ref{fig:reg}(b) displays  the data, the estimated quantile regression line corresponding to the Z-estimator $\hat\theta_{z^n}$. Plausibility contours are obtained for $\theta$ based on the loss function above and $B=500$, and the plot shows
the marginal plausibility contours for $\mu$ at selected values of $x$. The corresponding 95\% marginal plausibility band for $\mu$ is shown in Figure~\ref{fig:reg}(a).
Approximate validity of the plausibility bands is implied by the approximate validity of the generalized IM.  To check this claim empirically, we simulate 1000 data sets according the above scheme and calculated $\hatpiboot_{Z^n}(\theta)$, in each replication. Figure~\ref{fig:reg}(c) shows the empirical distribution of these values over replications and it is clear this closely matches a uniform distribution, confirming Theorem~\ref{thm:validity}.  The same plots for $\tau=0.25$ and $\tau=0.50$ are included and all suggest the uniform approximation for the distribution of $\hatpiboot_{Z^n}(\theta)$ is accurate across a range of quantile levels. Figure~\ref{fig:reg}(d) is analogous to Figure~\ref{fig:reg}(c), but considering $n=100$. As in the previous examples, the proposed solution achieves approximately validity even in finite-sample scenarios.

\begin{figure}[t]
\begin{center}
\subfigure[Data and fitted quantile curve]{\scalebox{0.57}{\includegraphics{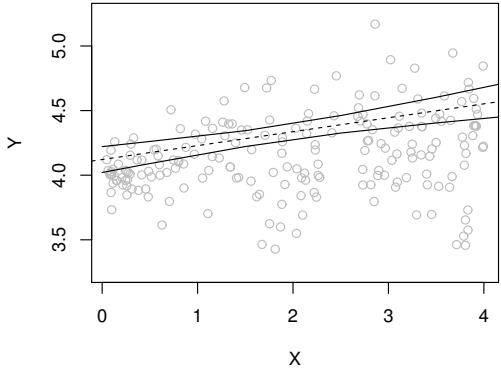}}}
\subfigure[Plausibility contours on top of Panel~(a)]{\scalebox{0.56}{\includegraphics{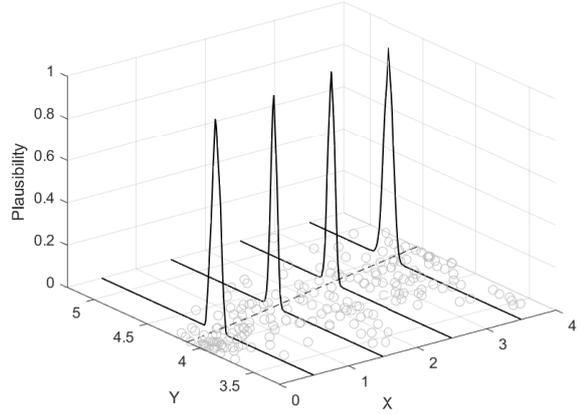}}}
\subfigure[Distribution function of $\hatpiboot_{Z^n}(\theta_\tau)$, $n=200$]{\scalebox{0.5}{\includegraphics{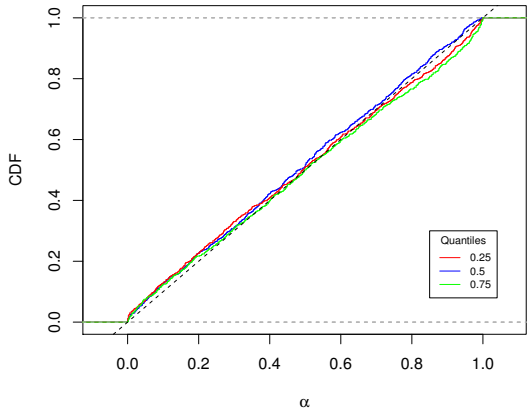}}}
\subfigure[Distribution function of $\hatpiboot_{Z^n}(\theta_\tau)$, $n=100$]{\scalebox{0.5}{\includegraphics{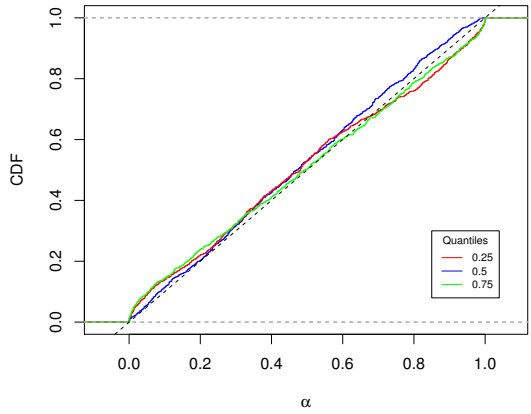}}}
\end{center}
\caption{Results for the illustration in Section~\ref{ss:quantile}. Panel~(a) shows the fitted quantile curve, for $\tau=0.75$, with the 95\% plausibility band.
}
\label{fig:reg}
\end{figure}

\section{Dynamic treatment regimes}
\label{S:dtr}

\subsection{Introduction}
Compared to the common one-size-fits-all approach in medicine, where treatment decisions are developed for the ``average'' patient, precision medicine focuses on tailoring treatment decisions to individual patients based on certain characteristics of their profile. {\em Dynamic treatment regimes} provide a formal precision medicine framework, where the individualization of treatments is dictated by a sequence of decision rules, one per stage of intervention, that are based on the patient’s
``history,'' which includes both covariates and past treatments \citep{chakradynreg}. 

In this section, we aim to provide a generalized IM solution to relevant problems that arise when considering dynamic treatment regimes, or regimes for short. For example, given a specific regime, a first basic problem is quantifying uncertainty about the expected outcome if the population under study were to receive treatment according to its rules. This expected outcome is also referred to as the {\em value} of a regime. In Section~\ref{ss:valueregime}, we construct a generalized IM for this purpose. A more challenging problem is where there is a large class of candidate decision rules and the goal is to identify an optimal one, i.e., the regime that maximizes the expected benefit to patients based on their history.  In Section~\ref{ss:optregime} we make the notion of an optimal regime precise, and develop a generalized IM to quantify uncertainty about its value.

Before tackling the above relevant problems, we provide a short background on dynamic treatment regimes and setup the basic notation. The details presented throughout this Section are largely based on \citet{dynamicregimes}.



\subsection{Background and notation}
\label{ss:backgroundregime}
For simplicity, our focus throughout this Section will be on the so called {\em single decision} problem, where there is only one stage at which a treatment must be selected from among a given set of available options.\footnote{We adopt the convention in \citet{dynamicregimes} that any treatment regime, single or multistage, whose decision rules potentially vary according to baseline and evolving patient information, is dynamic. However, other authors \citep[e.g.,][]{murphy2021} consider single decision regimes as ``non-dynamic.''} In this situation, a dynamic treatment regime consists of a single rule that takes as input the available patient information and returns as output one of the treatment options. Denote this set of possible treatment action options as $\mathcal{A}$. 
Formally, a dynamic treatment regime is defined as a {\em decision rule} $d(x)$, a function that maps an individual's covariates to a treatment option in $\mathcal{A}$, that is, $d: \mathcal{X} \to \mathcal{A}$, where $\mathcal{X}$ is the support of the covariates $X$. 
Here, we explore only the simplest case where $\mathcal{A}$ contains two treatment options, e.g., a control/active treatment scenario, so $d$ takes $X \in \mathcal{X}$ as input and returns either 0 or 1. 

Let $Z_i = (X_i,A_i,Y_i)$, for $i=1,\ldots,n$, represent the observed data from $n$ patients, where $A_i, Y_i$ and $X_i$ denote, respectively, the treatment received (either 0 or 1), the observed outcome under treatment $A_i$, and the covariates collected, for the $i^\text{th}$ patient. 
Central to both the definition of the value of a given regime and the notion of the optimal regime, to be explored, respectively, in Sections~\ref{ss:valueregime} and \ref{ss:optregime} below, is the concept of {\em potential outcome} for any regime $d \in \mathcal{D}$. 
Generally speaking, a potential outcome \citep[e.g.,][]{rubin74,rubin2005} is the outcome for an individual under a potential treatment. In our context, the random variables $Y^*(0)$ and $Y^*(1)$ represent the outcome that would be achieved by a randomly chosen individual with covariates $X$ in the population of interest if she were to receive treatment 0 or 1, respectively. Note that potential outcomes are hypothetical constructs, since a patient receives only one of the treatments, not both. The idea is to consider what this outcome would have been had the patient received the other treatment option. Now, if treatment is assigned according to regime $d$, the potential outcome of a patient is defined as
\begin{equation}\label{eq:potout}
Y^*(d) = Y^*(1) \, d(X) + Y^*(0) \, \{1 - d(X)\}.
\end{equation}

We end this subsection by pointing out that the essential results to be explored in the upcoming subsections depend on three fundamental assumptions that are common in the causal inference literature. 
\begin{itemize}
    \item {\em Stable unit treatment value assumption:} the outcome $Y$ of a patient who received treatment $A$ is the same as her potential outcome for that treatment, i.e.,
    \[ Y = Y^*(1) \, A + Y^*(0) \, (1-A).\]
    \item {\em No unmeasured confounders assumption:} all of the information used to make treatment decisions is captured by the observed covariates $X$, so that
    \[  \bigl[ \{ Y^*(0), Y^*(1) \} \indep A \bigr] \mid X. \]
    \item {\em Positivity assumption:} For any $X=x$, there are individuals receiving both treatment options, that is, $P(A=a \mid X=x)>0$ for $a=0,1$. 
\end{itemize}

\subsection{Value of a regime}
\label{ss:valueregime}
\subsubsection{Overview}
When considering a specific regime $d \in \mathcal{D}$, a fundamental question is how its use in the entire population would affect the outcome of interest, on average. With the definition of a potential outcomes in \eqref{eq:potout}, the value of any regime $d \in \mathcal{D}$ is defined as 
\[ \mathcal{V}(d) = E\{Y^*(d)\}.\] 

Towards uncertainty quantification of $\mathcal{V}(d)$ based on observed data $Z^n=z^n$, the challenge is deducing the distribution of $Y^*(d)$, which depends on that of $(X,Y^*(1),Y^*(0))$, from the distribution of the observable $(X,A,Y)$. Under the assumptions stated in the end of Section~\ref{ss:backgroundregime}, it can be shown that
\begin{equation}
\label{eq:valuereg}
E\{Y^*(d)\} = E[E(Y \mid X, A=1) \, d(X) + E[E(Y \mid X, A=0) \, \{1 - d(X)\}],
\end{equation}
where the outer expectation is with respect to the marginal distribution of $X$. If we introduce an outcome regression relationship---or {\em Q-function}---for the conditional mean, 
\begin{equation}\label{eq:outreg}
Q_{x,a}(\theta) = E(Y \mid X=x,A=a),
\end{equation}
depending on a parameter $\theta$; see \eqref{eq:Qlinear}. Then \eqref{eq:valuereg} becomes
\begin{equation}
\label{eq:valuereg*}
E\{Y^*(d)\} = E[Q_{X,1}(\theta) \, d(X) + Q_{X,0}(\theta) \, \{1 - d(X)\}].
\end{equation}

\subsubsection{Generalized IM construction}
\label{sss:valueregime2}

First, the simple connection between the Q-function---which depends on a parameter $\theta$---and the mean of the response $Y$ allows for a straightforward IM construction for $\theta$ interpreted as a risk minimizer.  For a given form describing the Q-function's dependence on the parameter $\theta$, we can define a loss function as 
\[ \ell_\vartheta(z) = \{y - Q_{x,a}(\vartheta)\}^2, \quad z=(x,a,y). \]
Then the generalized IM construction for the minimizer of the expected loss proceeds exactly as in, say, the quantile regression application above.  Asymptotic normality of the corresponding M-estimator and consistency of the bootstrap hold for very general Q-function specifications.  Note that this generalized IM construction for inference on the risk minimizer $\theta$ does not require that the posited functional form of $Q$ to be correct.  That is, the existence of the risk minimizer does not require that $Q_{x,a}(\theta)$ be the {\em true} conditional mean of $Y$, given $X=x$ and $A=a$; moreover, as we discussed in Section~\ref{S:intro}, if the risk minimizer exists, the it is a ``real'' inferential target, so drawing inference on the risk minimizer is meaningful whether there is a correctly-specfied model or not. 

We are not primarily interested in the aforementioned risk minimizer since, typically, the inferential target is some other characteristic of the problem.  Fortunately, these other characteristics can often be expressed as functions of $\theta$, i.e., as features $\phi=\phi(\theta)$; we will consider two such features below.  From the previously-described generalized IM for $\theta$, uncertainty quantification about the value of a given regime is readily obtained through marginalization as discussed in Section~\ref{SS:gim.naive}.  There is a catch, however, that deserves to be emphasized: these features have their desired interpretation only as functions of {\em the parameter $\theta$ on which the true $Q$-function depends}.  So, in order for marginal inference about these particular features, derived from a generalized IM for $\theta$, to be meaningful, it is required to assume that the posited form of the Q-function is correctly specified; that is, $\theta$ is not just the risk minimizer but determines the true conditional mean function.  Note, however, that this does not require correct specification of a statistical model---which includes distributional forms---for the observable data $Z_i = (X_i, A_i, Y_i)$.  The situation we are describing here falls under the general umbrella of {\em semiparametric inference}, where only a part of the model is assumed to be correctly specified.  

We present the details of our proposed generalized IM in the context of an example.  Consider the simulated observational study presented in \url{https://laber-labs.com/dtr-book/booktoc.html},\footnote{This is \citet{dynamicregimes}'s companion website, where several examples are provided. The particular example we explore here can be found under the ``Chapter 3'' tab.} whose objective is to assess the effectiveness of a fictitious medication developed for the treatment of hypertension. 
Each patient in this study either received the new medication ($A=1$) or received no treatment ($A=0$) based on patient/physician discretion. The outcome of interest $Y$ is the change in systolic blood pressure (mmHg) after six months of treatment, i.e., $Y = Y_0 - Y_6$. The covariates $X = (X_1,X_2)$ are, respectively, the total cholesterol (mg/dl) and the potassium level (mg/dl). Here is how the data are generated. Let $Y_{0,i} \iid \nm(160,12^2)$, $i=1,\ldots,n$, with $n=1000$ and the constraint $140 < Y_{0,i} \leq 200$. Let $X_{1,i} \iid \nm(211,45^2)$, $X_{2,i} \iid \nm(4.2,0.35^2)$, $A_i \sim \ber(\pi(x_{1,i},y_{0,i}))$, where
\[\pi(x,y)  = \frac{\exp\{-16.348 + 0.078 y + 0.017 x\}}{1 + \exp\{-16.348 + 0.078 y + 0.017 x\}},\]
and $Y_{6,i} = Y_{0,i} - \nm(\mu(x_i,a_i),3^2)$, where
\[\mu(x,a) = -15 -0.2 x_1 + 12 x_2 +a(-65 +0.5 x_1 - 5.5 x_2).\]
This implies an outcome regression relationship $Q_{x,a}(\theta)$ in \eqref{eq:outreg} given by 
\begin{equation}
\label{eq:Qlinear}
Q_{x,a}(\theta) =  \theta_0 +\theta_1 x_1 + \theta_2 x_2 +  \theta_3 a  +  \theta_4 a x_1 + \theta_5 a x_2, 
\end{equation}
depending on $\theta = (\theta_0,\theta_1,\theta_2,\theta_3, \theta_4, \theta_5)$. From the definition of $\mu(x,a)$ above, the true values of $\theta_0$, $\theta_1$, $\theta_2$, $\theta_3$, $\theta_4$, and $\theta_5$ are $-15$, $-0.2$, $12$, $-65$, $0.5$ and $-5.5$, respectively.  Note that the only aspect of the above description that the generalized IM assumes as ``true'' is the Q-function specification in \eqref{eq:Qlinear}; the statements concerning the {\em distributions} of the observables are not used at all in the generalized IM formulation nor are they assumed true in the supporting theory presented in Section~\ref{SS:asymptotic}. 

As a first check, we empirically verify the approximate validity claim in Theorem~\ref{thm:validity} for inference on $\theta$, the true parameters of the regression function.  Figure~\ref{fig:dtr.valid} shows the distribution function of $\hatpiboot_{Z^n}(\theta)$ based on repeated sampling from the data-generating process described above, with $\theta$ the true values.  As the theory predicts, we see that the distribution of $\hatpiboot_{Z^n}(\theta)$ is almost exactly $\unif(0,1)$, which means that inference drawn on $\theta$ in this setting is approximately---and almost exactly---valid. 

\begin{figure}[t]
\begin{center}
\scalebox{0.7}{\includegraphics{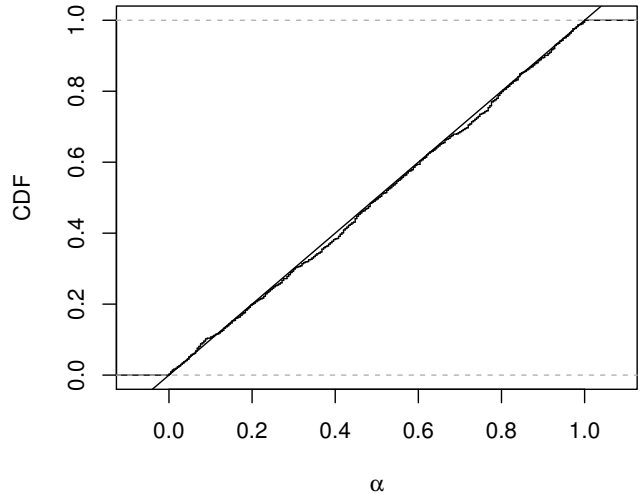}}
\end{center}
\caption{The plot shows the empirical distribution function of $\hatpiboot_{Z^n}(\theta)$ under repeated sampling from the data-generating process described in the text.}
\label{fig:dtr.valid}
\end{figure}

Next, we consider three different marginalization examples.  First, in treatment/control scenarios, it is common that the first enquiry performed by the data analyst concerns the presence of a {\em treatment effect}. Here the treatment effect is the average change in systolic blood pressure, after six months, if everyone in the population took the new medication compared to if everyone took the old medication. For the Q-function in \eqref{eq:Qlinear}, the treatment effect is $\phi := \theta_3 + \theta_4 \, E(X_1) + \theta_5 \, E(X_2)$, so interest is in the assertion ``$\phi = 0$.''  For the particular data $z^n$, the upper probability assigned to this assertion is approximately 0; this is not particularly surprising, given that the true treatment effect is equal to $-65 + 0.5\times 211 - 5.5\times 4.2= 17.4$ mmHg.


Second, we investigate marginal inference on the value of a fixed regime.  In this case, we consider two such regimes: 
\begin{itemize}
    \item the {\em static} regime where all individuals are recommended to receive the new medication, i.e., $d(X) \equiv 1$;
\vspace{-2mm}
    \item the {\em covariate-dependent} regime that assigns patients to receive the treatment if their cholesterol level exceeds a certain threshold, i.e., 
\begin{equation}
\label{eq:not.static}
d(X) = 1\{X_1>120 \ \text{mg/dl}\}.
\end{equation}
\end{itemize}
From \eqref{eq:valuereg*}, the values of the static and covariate dependent regimes are, respectively,
\begin{align*}
\phi_\text{\sc st} &:=E\{\theta_0 + \theta_3 + (\theta_1 + \theta_4)X_1 + (\theta_2 + \theta_5)X_2 \} \\
\phi_\text{\sc cd} &:=E[\theta_0 + \theta_1 X_1 + \theta_2 X_2 + (\theta_3 + \theta_4X_1 + \theta_5X_2)1\{X_1>120\} ],
\end{align*}
and Figure~\ref{fig:dynreg}(a) shows the corresponding marginal plausibility contours for each, both obtained from the generalized IM for $\theta$.  The plot suggests that, not unexpectedly, the covariate-dependent regime is no worse than the static regime. 

Third, a relevant question in practice might be whether  there is a difference between two fixed regimes. For example, given the overlap of the  plausibility contours in Figure~\ref{fig:dynreg}(a), one may wonder if the covariate-dependent regime is in fact better than the static regime. More specifically, interest is whether $\phi_\text{\sc cd} > \phi_\text{\sc st}$. Figure~\ref{fig:dynreg}(b) shows the marginal plausibility contour for the difference $\phi_\text{\sc cd} - \phi_\text{\sc st}$, obtained, once again, from the generalized IM for $\theta$. Despite the differences being small, the assertion ``$\phi_\text{\sc cd} = \phi_\text{\sc st}$'' has zero plausibility, confirming the superiority of the covariate-dependent regime. Whether these small differences are practically relevant is a separate question.


\begin{figure}[t]
\begin{center}
\subfigure[Plausibility contour for $\phi = {\cal V}(d)$]{\scalebox{0.55}{\includegraphics{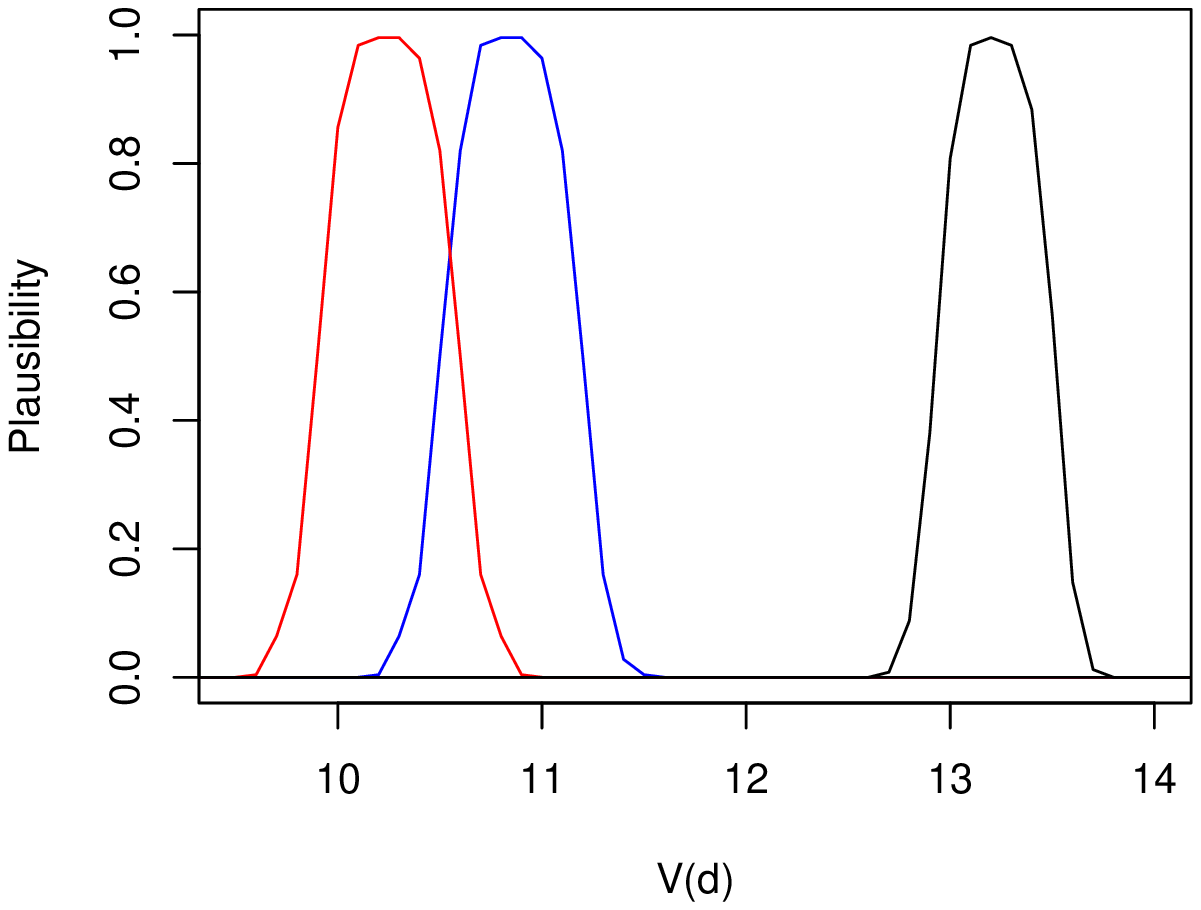}}}
\subfigure[Plausibility contour for $\phi_\text{\sc cd}-\phi_\text{\sc st}$]{\scalebox{0.55}{\includegraphics{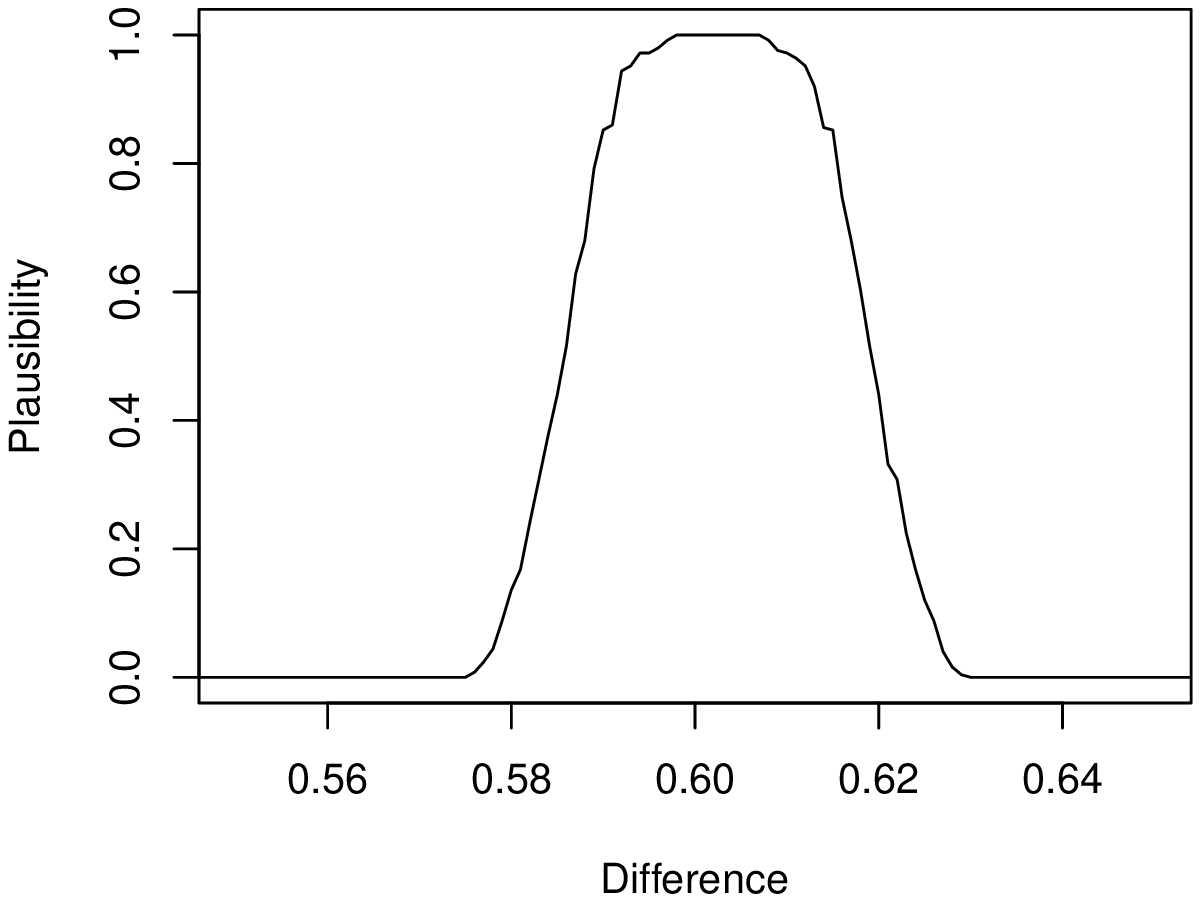}}}
\end{center}
\caption{In Panel~(a), the red curve is the plausibility contour for $\phi = \mathcal{V}(d)$ under the static regime, $d(X) \equiv 1$, blue is for the covariate-dependent regime \eqref{eq:not.static}, and black is the optimal regime discussed in Section~\ref{ss:optregime}. Panel (b) shows the plausibility contour for the difference between the covariate-dependent and static regimes.
}
\label{fig:dynreg}
\end{figure}

\subsection{Optimal regime}
\label{ss:optregime}

Estimating the value of a specific decision rule, $d$, may be of some interest in applications, but a more challenging problem is to identify the optimal regime within a given set $\mathcal{D}$, that is, a regime that leads to the best benefit on average if used to select treatment in the population. For situations where larger values of the response variable mean greater benefit to the patient, like in the example in Section~\ref{sss:valueregime2} above, the optimal regime $d^*$ is defined as one that leads to the maximum value among all $d \in \mathcal{D}$, i.e.,
\[d^* = \arg\max_{d \in \mathcal{D}} \mathcal{V}(d) \]
or, equivalently,
\begin{equation}
\label{eq:optregimedef}
    E\{Y^*(d^*)\} \geq E\{Y^*(d)\} \quad \text{for all }d \in \mathcal{D}.
\end{equation}
It is possible that more than one regime satisfies \eqref{eq:optregimedef}, but we will not concern ourselves with this technicality here. 
Recall that our focus is on a single stage treatment regime with $\mathcal{A} = \{0,1\}$.  For such a case, \citet{dynamicregimes} prove that \eqref{eq:optregimedef} is satisfied by
\begin{align}\label{eq:optregimey*}
d^*(x) &= \arg\max_{a \in \mathcal{A}} E\{Y^*(a) \mid X=x\} \nonumber \\
&= 1[E\{Y^*(1) \mid X=x\} > E\{Y^*(0) \mid X=x\}].
\end{align}




Just like in the previous subsection, uncertainty quantification about $d^*$ based on observed data $z^n$ requires us to rewrite \eqref{eq:optregimey*} in terms of $Z^n$.  Under the assumptions stated at the end of Section~\ref{ss:backgroundregime}, \eqref{eq:optregimey*} can be equivalently written as
\begin{equation}
\label{eq:optregimey**}
d^*(x) = \arg\max_{a \in \mathcal{A}} E\{Y \mid X=x, A=a\},
\end{equation}
which, under the outcome regression model formulation can itself be rewritten as 
\begin{align}
\label{eq:optQ}
d^*(x) = \arg\max_{a \in \mathcal{A}} Q_{x,a}(\theta) = 1\{Q_{x,1}(\theta) > Q_{x,0}(\theta)\}.
\end{align}
Moreover, the value of this optimal treatment is given by
\begin{equation}\label{eq:valueoptregime}
\mathcal{V}(d^*) = E\Bigl\{\max_{a \in \mathcal{A}} Q_{X,a}(\theta)\Bigr\},
\end{equation}
where, again, the outer expectation is with respect to the distribution of $X$.  The right-hand side of the above display is, again, a function $\phi=\phi(\theta)$ of $\theta$, so if we have a generalized IM for $\theta$, then we can readily obtain a marginal IM for $\phi$. 

As an illustration, consider again the example explored in Section~\ref{sss:valueregime2}, where $Q_{x,a}(\theta)$ is linear as in \eqref{eq:Qlinear}. In this case, it is clear that \eqref{eq:optQ} becomes
\[d^*(x) = 1\{\theta_3 + \theta_4 x_1 + \theta_5 x_2 > 0\}.\]
From \eqref{eq:valueoptregime}, the value of this optimal regime is given by
\begin{equation}
\label{eq:dstarex}
\mathcal{V}(d^*) = E\{\theta_0 + \theta_1X_1 + \theta_2X_2 + (\theta_3 + \theta_4X_1 + \theta_5X_2)1\{\theta_3 + \theta_4X_1 + \theta_5X_2 > 0\} \}.
\end{equation}
Uncertainty quantification about the value $\mathcal{V}(d^*)$ above is obtained through marginalization, as it was the case for the fixed regimes considered earlier. For example, to obtain the plausibility contour in Figure~\ref{fig:dynreg}(a), one starts with the generalized IM for $\theta$ and marginalize to the corresponding $\mathcal{V}(d^*)$ in \eqref{eq:dstarex}. Note how the value of $d^*$ is significantly greater than the values of the two fixed regimes also shown in Figure~\ref{fig:dynreg}.

\section{Conclusion}
\label{S:Conclusion}

Here we focused on direct, data-driven uncertainty quantification for unknowns defined as risk minimizers or solutions to estimating equations rather than parameters of a statistical model.  We presented a new generalized IM that not only avoids the explicit description of the data generating process, but does not require a statistical model at all. We showed that this construction 
leads to approximately valid uncertainty quantification in the sense of Theorem~\ref{thm:validity}.  This provides guarantees beyond those from classical confidence regions. That is, the IM's validity property applies to belief assignments to all assertions about the inferential target---even marginal inference about features of the original inferential target.  To our knowledge, this is the first paper providing direct and valid probabilistic uncertainty quantification in this practically relevant class of learning problems.  

Applications in cases beyond the simple, low-dimensional problems above will be reported elsewhere.  Of course, larger dimension creates computational challenges, so getting marginal plausibility contours for each component of the high-dimensional inferential target in an efficient way remains an open question. Since evaluation of $\hat\pi_{z^n}^\text{boot}$ is based on sampling (bootstrap or Monte Carlo), and marginalization is optimization in this imprecise probability setting, techniques like {\em stochastic approximation} or {\em stochastic gradient descent} seem especially promising; see, e.g., \citet{syring.martin.isipta21}.

We end this section with a brief discussion of some open questions.  First, it is well-known that the bootstrap often enjoys a certain higher-order accuracy, that is, the coverage probability of bootstrap-based confidence regions converge to the nominal level at a rate faster than the expected root-$n$; see, e.g., \citet{hall.book} and \citet{lehmann.large-sample}.  Similarly, in other settings, with simpler versions of the generalized IM framework developed here, it was observed empirically that the uniform limit distribution approximation for ``$\hat\pi_{Z^n}(\theta)$'' was quite accurate, even for small samples, suggesting some higher-order accuracy.  The proposed generalized IM in this paper borrows aspects of these two approaches that (at least empirically) enjoy higher-order accuracy.  Then the question is if this combination of two higher-order accurate methods is also higher-order accurate?

Second, although this did not appear in our illustrations in Section~\ref{S:dtr}, an especially challenging aspect of the dynamic treatment regime problem is non-regularity, resulting from the non-smooth ``max'' operator in \eqref{eq:optregimey**}, that affects the limit distribution theory of the M/Z-estimators and, in turn, the corresponding bootstrap-based inference.  A common remedy for failure of the bootstrap, e.g., due to non-regularity, is to ``under-sample'' with the so-called {\em m-out-of-n bootstrap}.  That is, \citet{bickel1997} showed that, by taking bootstrap samples of size $m=o(n)$, bootstrap failure could be avoided.  A more sophisticated $m$-out-of-$n$ bootstrap scheme was proposed in \citet{laber.mnboot} that could prevent bootstrap failure in the dynamic treatment regime setting resulting from the non-regularity induced by the ``max'' operator.  An interesting follow-up project could investigate the reliability of the proposed generalized IM equipped with a $m$-out-of-$n$ bootstrap strategy under non-regularity.  


\section*{Acknowledgments}
The authors thank the Guest Editor and the two anonymous reviewers for their helpful feedback on a previous version of this manuscript.  This work is partially supported by the U.S.~National Science Foundation, grants DMS--1811802 and SES--2051225.

\bibliographystyle{apalike}
\bibliography{literature}

\end{document}